 \newtheorem{thm}{Theorem}[section]
 \newtheorem{cor}[thm]{Corollary}
 \newtheorem{lem}[thm]{Lemma}
 \theoremstyle{definition}
 \newtheorem{defn}[thm]{Definition}
 \theoremstyle{remark}
 \newtheorem{rem}[thm]{Remark}
 \newtheorem*{ex}{Example}
 \numberwithin{equation}{section}
\def\R{{\mathbb{R}}}
\def\Rn{{\mathbb{R}^n}}
\def\a {\alpha}
\def\i{\infty}
\def\l {\lambda}
\def\L1loc{L_{\Phi}^{\rm loc}(\Rn)}
\def\dual{\,^{^{\complement}}\!}
\newcommand{\tcr}{\textcolor{red}}
\newcommand{\es}{\mathop{\rm ess \; inf}\limits}
\begin{document}

\begin{center}
\Large \bf Boundedness of intrinsic square functions and their commutators
on generalized weighted Orlicz-Morrey spaces
\end{center}

\centerline{\large Vagif Guliyev$^{a,b,}$\footnote{
Corresponding author.
\\
2010 {\it Mathematics Subject Classification.} Primary 42B25; Secondary 42B20, 42B35, 46E30.
\\
{\it Key words and phrases.} Generalized weighted Orlicz-Morrey space, intrinsic square functions, commutator, BMO.}, 
Mehriban Omarova$^{b,c}$, Yoshihiro Sawano$^{d}$}

\

{\bf Abstract.} We shall investigate the boundedness of the intrinsic square functions and their commutators
on generalized weighted Orlicz-Morrey spaces $M^{\Phi,\varphi}_{w}({\mathbb R}^n)$.
In all the cases, the conditions for the boundedness are given in terms of Zygmund-type integral inequalities
on weights $\varphi$ without assuming any monotonicity property of $\varphi(x,\cdot)$ with $x$ fixed.

\

\section{Introduction}

In the present paper, we are \tcr{concerned} with the intrinsic square functions,
which Wilson introduced initially \cite{Wilson1, Wilson2}.
For $0<\a\leq 1$, let $C_{\a}$ be the family of Lipschitz functions $\phi : \Rn\rightarrow \mathbb{R}$
of order $\alpha$ with the homogeneous norm $1$ such that the support of $\phi$
is contained in the closed ball $\{x:|x|\leq 1\}$, and that $\int_{\Rn}\phi(x) dx =0$.
For $(y,\ t)\in \mathbb{R}_{+}^{n+1}$ and $f\in L^{1,\rm loc}(\Rn)$, set
$$
A_{\a}f(t,\ y)\equiv\sup_{\phi\in C_{\a}}|f*\phi_{t}(y)|,
$$
where $\phi_{t}\equiv t^{-n}\phi\left(\frac{\cdot}{t}\right)$.
Let $\beta$ be an auxiliary parameter.
Then we define
the varying-aperture intrinsic square (intrinsic Lusin) function of $f$
by the formula;
$$
G_{\a,\beta}(f)(x)\equiv
\left(\iint_{\Gamma_{\beta}(x)}
(A_{\a}f(t,y))^{2}\frac{dydt}{t^{n+1}}\right)^{\frac{1}{2}},
$$
where
$\Gamma_{\beta}(x)\equiv\{(y,t) \in \mathbb{R}_{+}^{n+1}:|x-y|<\beta t\}$.
Write $G_{\a}(f)=G_{\a,1}(f)$ .

Everywhere in the sequel,
$B(x,r)$ stands for the ball in $\Rn$
of radius $r$ centered at $x$ and
we let $|B(x,r)|$ be the Lebesgue measure of the ball $B(x,r)$;
$|B(x,r)|=v_n r^n$, where $v_n$ is the volume of the unit ball in $\Rn$.
We recall generalized weighted Orlicz-Morrey spaces,
on which we work in the present paper.
\begin{defn}[Generalized weighted Orlicz-Morrey Space]
Let $\varphi$ be a positive measurable function
on $\Rn\times (0,\i)$,
let $w$ be non-negative measurable function on $\Rn$
and $\Phi$ any Young function.
Denote by $M^{\Phi,\varphi}_{w}({\mathbb R}^n)$
the generalized weighted Orlicz-Morrey space,
the space of all functions $f\in L^{\Phi,\rm loc}_{w}({\mathbb R}^n)$
such that
$$
\|f\|_{M^{\Phi,\varphi}_{w}} \equiv
\sup\limits_{x\in\Rn, r>0} \varphi(x,r)^{-1} \,
\Phi^{-1}\big(w(B(x,r))^{-1}\big) \, \|f\|_{L^{\Phi}_{w}(B(x,r))},
$$
where
$
\|f\|_{L^{\Phi}_{w}(B(x,r))}
\equiv
\inf\left\{\lambda>0\,:\,
\int_{B(x,r)}\Phi\left(\frac{|f(x)|}{\lambda}\right)w(x)\,dx \le 1
\right\}.
$
\end{defn}
According to this definition,
we recover the generalized weighted Morrey space
$M^{p,\varphi}_{w}({\mathbb R}^n)$
by the choice $\Phi(r)=r^{p},\,1\le p<\i$.
If $\Phi(r)=r^{p},\,1\le p<\i$ and $\varphi(x,r)=r^{-\frac{\lambda}{p}}$,
$0\le \lambda \le n$, then $M^{\Phi,\varphi}_{w}({\mathbb R}^n)$ coincides
with the weighted Morrey space $M^{p,\varphi}_{w}({\mathbb R}^n)$
and if $\varphi(x,r)=\Phi^{-1}(w(B(x,r)^{-1}))$,
then $M^{\Phi,\varphi}_{w}({\mathbb R}^n)$ coincides
with the weighted Orlicz space $L^{\Phi}_{w}({\mathbb R}^n)$.
When $w=1$, then $L^{\Phi}_{w}({\mathbb R}^n)$
is abbreviated to $L^{\Phi}({\mathbb R}^n)$.
The space $L^{\Phi}({\mathbb R}^n)$ is the classical Orlicz space.

Our first theorem of the present paper is the following one:
\begin{thm}\label{thm4.4.}
Let $\a\in(0,1]$ and $1<p_0\le p_1<\i$.
Let $\Phi$ be a Young function which is lower type $p_0$ and upper type $p_1$.
Namely,
\[
\Phi(st_0) \le Ct_0{}^{p_0}\Phi(s), \quad
\Phi(st_1) \le Ct_1{}^{p_1}\Phi(s)
\]
for all $s>0$ and $0<t_0 \le 1 \le t_1<\infty$.
Assume that $w \in A_{p_0}$ and
that the measurable functions $\varphi_1,\varphi_2:{\mathbb R}^n \times (0,\infty) \to (0,\infty)$ and $\Phi$ satisfy the condition;
\begin{equation}\label{eq3.6.VZPot}
\int_{r}^{\i}
\es_{t<s<\i}\frac{\varphi_1(x,s)}{\Phi^{-1}\big(w(B(x_0,s))^{-1}\big)}
\Phi^{-1}\big(w(B(x_0,t))^{-1}\big)\frac{dt}{t} \le C \,
\varphi_2(x,r),
\end{equation}
where $C$ does not depend on $x$ and $r$.
Then $G_{\a}$ is bounded
from $M^{\Phi,\varphi_1}_{w}(\Rn)$ to $M^{\Phi,\varphi_2}_{w}(\Rn)$.
\end{thm}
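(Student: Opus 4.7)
The plan is to reduce the Morrey-to-Morrey bound for $G_{\a}$ to a single local inequality on an arbitrary ball $B(x_0,r)$, and then to absorb the resulting integral expression via the hypothesis (\ref{eq3.6.VZPot}). The target local estimate is
\[
\|G_{\a}f\|_{L^{\Phi}_{w}(B(x_0,r))} \lesssim \frac{1}{\Phi^{-1}\!\big(w(B(x_0,r))^{-1}\big)} \int_{2r}^{\i} \|f\|_{L^{\Phi}_{w}(B(x_0,t))}\,\Phi^{-1}\!\big(w(B(x_0,t))^{-1}\big)\,\frac{dt}{t},
\]
uniformly in $x_0\in\Rn$ and $r>0$. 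Granted this, multiplying both sides by $\varphi_2(x_0,r)^{-1}\Phi^{-1}\!\big(w(B(x_0,r))^{-1}\big)$ and using the monotonicity of $t\mapsto\|f\|_{L^{\Phi}_{w}(B(x_0,t))}$ to majorize $\|f\|_{L^{\Phi}_{w}(B(x_0,t))}\le\es_{t<s<\i}\frac{\varphi_1(x_0,s)}{\Phi^{-1}(w(B(x_0,s))^{-1})}\,\|f\|_{M^{\Phi,\varphi_1}_{w}}$, the hypothesis (\ref{eq3.6.VZPot}) immediately yields the claimed boundedness after taking the supremum in $(x_0,r)$.

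To prove the local estimate, fix $B=B(x_0,r)$ and decompose $f=f_1+f_2$ with $f_1=f\chi_{2B}$. For the local piece $f_1$ the argument rests on the $L^{\Phi}_{w}(\Rn)$-boundedness of $G_{\a}$, which follows from Wilson's theorem giving $G_{\a}\colon L^{p}_{w}\to L^{p}_{w}$ for all $1<p<\i$ and $w\in A_{p}$, combined with Rubio de Francia-type extrapolation in the Orlicz setting (using the lower and upper type hypotheses on $\Phi$ together with $w\in A_{p_{0}}$). This yields $\|G_{\a}f_{1}\|_{L^{\Phi}_{w}(B)}\lesssim\|f\|_{L^{\Phi}_{w}(2B)}$, and a standard doubling argument (based on $\|f\|_{L^{\Phi}_{w}(2B)}\le\|f\|_{L^{\Phi}_{w}(B(x_0,t))}$ for $t\ge 2r$ together with the doubling of $w$, which produces $\int_{2r}^{4r}\Phi^{-1}\!\big(w(B(x_0,t))^{-1}\big)\frac{dt}{t}\gtrsim\Phi^{-1}\!\big(w(B)^{-1}\big)$) re-expresses this in the desired integral form.

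For the non-local piece $f_2$, the support and size hypotheses on $\phi\in C_{\a}$ give $A_{\a}f_2(t,y)\lesssim t^{-n}\int_{B(y,t)\setminus 2B}|f(z)|\,dz$, and for $x\in B$, $(y,t)\in\Gamma(x)$ and $z\notin 2B$ with $|y-z|\le t$ one must have $t\gtrsim r$ and $B(y,t)\subset B(x_0,ct)$ for an absolute constant $c$. Inserting these bounds into the definition of $G_{\a}$ and dyadically decomposing in $t$ produces the pointwise estimate
\[
G_{\a}f_{2}(x) \lesssim \int_{2r}^{\i}\left(\frac{1}{t^{n}}\int_{B(x_0,ct)}|f(z)|\,dz\right)\frac{dt}{t}.
\]
Applying the Orlicz-H\"older inequality in the weighted Orlicz space to the inner average (using $w\in A_{p_{0}}$ and the lower/upper type hypotheses on $\Phi$) gives $\frac{1}{t^{n}}\int_{B(x_0,ct)}|f(z)|\,dz\lesssim\|f\|_{L^{\Phi}_{w}(B(x_0,t))}\,\Phi^{-1}\!\big(w(B(x_0,t))^{-1}\big)$; taking the $L^{\Phi}_{w}(B)$-norm of the resulting ($x$-independent) upper bound via the identity $\|\chi_{B}\|_{L^{\Phi}_{w}(B)}=\Phi^{-1}\!\big(w(B)^{-1}\big)^{-1}$ then delivers the local estimate.

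The main obstacle will be the Orlicz-H\"older step producing the factor $\Phi^{-1}\!\big(w(B(x_0,t))^{-1}\big)$ from the raw average $t^{-n}\int_{B(x_0,ct)}|f(z)|\,dz$ uniformly in $t$: this is where the $A_{p_{0}}$ property of $w$ and the lower/upper type assumptions on $\Phi$ jointly enter, and it is the technical heart of the weighted Orlicz-Morrey machinery. A secondary, more routine obstacle is the extrapolation argument securing the $L^{\Phi}_{w}$-boundedness of $G_{\a}$ that is used for the local piece.
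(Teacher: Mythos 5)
Your proposal is correct and takes essentially the same route as the paper: the same splitting of $f$ into its parts on $2B$ and off $2B$, the same local estimate (the paper's Lemma \ref{lem3.3.}, with your Orlicz--H\"older step for the raw average being exactly its Lemma \ref{lemHold}, proved there via the Kerman--Torchinsky weighted maximal bound), and the same absorption of the resulting integral through the Zygmund condition \eqref{eq3.6.VZPot}. The only differences are cosmetic: you obtain the weighted Orlicz boundedness of $G_{\a}$ by extrapolation where the paper cites Liang--Nakai--Yang--Zhou, and you inline the weighted Hardy-operator argument that the paper invokes as a black box (Theorem \ref{thm3.2.}).
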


Theorem \ref{thm4.4.} extends
the result below due to Liang, Nakai, Yang and Zhou.
\begin{thm}{\rm \cite{LiNaYaZh}}\label{LiNaYaZhprop2.11-1}
Let $\a\in(0,1]$ and $1<p_0\le p_1<\i$.
Let $\Phi$ be a Young function which is lower type $p_0$ and upper type $p_1$.
Then $G_{\a}$ is bounded from $L^{\Phi}(\Rn)$ to itself.
\end{thm}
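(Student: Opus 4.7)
The plan is to bootstrap from the classical $L^p(\Rn)$ boundedness of $G_{\alpha}$, valid for every $p\in (1,\infty)$ and due to Wilson, to the Orlicz space $L^{\Phi}(\Rn)$ via the good-$\lambda$ method. Since $\Phi$ is of lower type $p_0>1$ and upper type $p_1<\infty$, it satisfies both the $\Delta_2$ and $\nabla_2$ conditions, so the Orlicz norm is equivalent to the modular functional $f\mapsto \int \Phi(|f|)\,dx$, and it suffices to prove the modular inequality $\int_{\Rn} \Phi(G_{\alpha} f(x))\,dx \leq C\int_{\Rn} \Phi(|f(x)|)\,dx$. Additionally, under these same type conditions the Hardy--Littlewood maximal operator $M$ is bounded on $L^{\Phi}(\Rn)$, so the task reduces further to establishing $\int \Phi(G_{\alpha}f)\,dx\leq C\int \Phi(Mf)\,dx$.

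I would then invoke Wilson's good-$\lambda$ inequality comparing $G_{\alpha} f$ and $Mf$: there exist $\delta,C>0$ such that for all $\gamma>0$ sufficiently small and all $\lambda>0$,
\[
\big|\{G_{\alpha} f>2\lambda,\ Mf\leq\gamma\lambda\}\big| \leq C\gamma^{\delta}\big|\{G_{\alpha} f>\lambda\}\big|.
\]
Multiplying by $d\Phi(\lambda)$, integrating from $0$ to $\infty$, and using the layer-cake formula together with the $\Delta_2$-property (which gives $\Phi(2\lambda)\leq C_{\Delta_2}\Phi(\lambda)$), one is led to an inequality of the form
\[
\int \Phi(G_{\alpha} f)\,dx \leq C_{\Delta_2}\,C\,\gamma^{\delta}\int \Phi(G_{\alpha} f)\,dx + C_{\gamma}\int \Phi(Mf)\,dx.
\]
Choosing $\gamma$ small enough that $C_{\Delta_2}C\gamma^{\delta}<1/2$ and absorbing the self-improving term yields the desired modular inequality.

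The main technical obstacle is the absorption step, which requires $\int \Phi(G_{\alpha} f)\,dx$ to be known a priori finite. This is handled by a standard truncation argument: first prove the inequality with $G_{\alpha} f$ replaced by $\min(G_{\alpha} f, N)$ for each $N>0$, and pass to the limit $N\to\infty$ by monotone convergence. One must also know that $M$ is bounded on $L^{\Phi}(\Rn)$ under the assumed type conditions, which is classical given $\Phi\in\nabla_2\cap\Delta_2$. Alternatively, the entire statement follows from Rubio de Francia's extrapolation theorem applied to Wilson's weighted $L^p(w)$ boundedness of $G_{\alpha}$ for $w\in A_p$, bypassing the good-$\lambda$ computation entirely and reducing the Orlicz estimate to a direct verification that the type conditions are in the admissible range for the extrapolation machinery.
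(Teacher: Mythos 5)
The paper does not prove this statement at all: it is imported verbatim from Liang--Nakai--Yang--Zhang \cite{LiNaYaZh} and used as a black box (it enters only through \eqref{eq:140507-7} in the proof of Lemma \ref{lem3.3.}), so there is no in-paper argument to compare yours against and I can only assess your proposal on its own terms. Your strategy is sound and standard. The reduction from the norm inequality to the modular inequality $\int\Phi(G_{\a}f)\lesssim\int\Phi(|f|)$ is legitimate under the $\Delta_2$-condition (your phrase ``the norm is equivalent to the modular'' is not literally true --- a modular is not homogeneous --- but the implication you actually use, namely that a modular inequality plus the lower-type property yields norm boundedness, is exactly the content of Lemma \ref{Kylowupp}); the boundedness of $M$ on $L^{\Phi}(\Rn)$ under $\Phi\in\nabla_2$ is classical (Kerman--Torchinsky, Kita, Cianchi, all in the bibliography); and the layer-cake integration, the absorption, and the truncation needed for a priori finiteness are all correctly handled. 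The one point you should not wave at is the good-$\lambda$ inequality itself: $|\{G_{\a}f>2\lambda,\ Mf\le\gamma\lambda\}|\le C\gamma^{\delta}|\{G_{\a}f>\lambda\}|$ does not follow formally from the unweighted $L^p$ bounds and requires either a proof (a Whitney decomposition of the open set $\{G_{\a}f>\lambda\}$ together with an $L^2$ estimate of the localized square function of the ``good'' part, which is where $\delta=2$ comes from) or a precise citation; \cite{Wilson1} states the weighted $L^p(w)$ bounds for $w\in A_p$ rather than this exact distributional inequality. For that reason your alternative route --- Rubio de Francia extrapolation from Wilson's $L^{p}(w)$, $w\in A_p$, estimates to rearrangement-invariant spaces with Boyd indices in $(1,\infty)$, a condition $L^{\Phi}$ satisfies precisely because $\Phi$ is of lower type $p_0>1$ and upper type $p_1<\i$ --- is the cleaner and fully rigorous path, and is close in spirit to how \cite{LiNaYaZh} actually derive the Musielak--Orlicz version.
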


The function $G_{\a,\beta}(f)$ is independent of any particular kernel,
such as the Poisson kernel.
It dominates pointwise the classical square function
(Lusin area integral) and its real-variable generalizations.
Although the function $G_{\a,\beta}(f)$ depends on kernels
with uniform compact support, there is pointwise relation
between $G_{\a,\beta}(f)$ with different $\beta$:
$$
G_{\a,\beta}(f)(x)\leq\beta^{\frac{3n}{2}+\a}G_{\a}(f)(x)\ .
$$
See \cite{Wilson1} for details.

The intrinsic Littlewood-Paley $\mathrm{g}$-function is defined by
$$
g_{\a}f(x) \equiv \left(\int_{0}^{\i}(A_{\a}f(t,x))^{2}\frac{dt}{t}\right)^{\frac{1}{2}}.
$$
Also, the intrinsic $g_{\lambda,\a}^{*}$ function is defined
by
$$
g_{\lambda,\a}^{*}f(x)
\equiv
\left(\iint_{\mathbb{R}_{+}^{n+1}}
\left(\frac{t}{t+|x-y|}\right)^{n\lambda}(A_{\a}f(t,y))^{2}
\frac{dydt}{t^{n+1}}\right)^{\frac{1}{2}}.
$$
About this intrinsic Littlewood-Paley $\mathrm{g}$-function,
we shall prove the following boundedness property:
\begin{thm}\label{thm4.4.x}
Let $\a\in(0,1]$, $1<p_0\le p_1<\i$
and $\displaystyle \lambda\in\left(3+\frac{2\a}{n},\infty\right)$.
Let also
$\Phi$ be a Young function which is lower type $p_0$ and upper type $p_1$.
Assume that $w \in A_{p_0}$ and
that the functions
$\varphi_1,\varphi_2:{\mathbb R}^n \times (0,\infty) \to (0,\infty)$
and
$\Phi$ satisfy the condition \eqref{eq3.6.VZPot}.
Then $g_{\lambda,\a}^{*}$ is bounded
from $M^{\Phi,\varphi_1}_{w}(\Rn)$ to $M^{\Phi,\varphi_2}_{w}(\Rn)$.
\end{thm}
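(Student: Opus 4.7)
The plan is to reduce Theorem \ref{thm4.4.x} to Theorem \ref{thm4.4.} by means of a standard pointwise domination of $g_{\lambda,\a}^{*}$ by the varying-aperture intrinsic square function $G_{\a,\beta}$, followed by the pointwise control of $G_{\a,\beta}$ by $G_{\a}$ already recalled in the introduction.

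First, I would split the domain of integration $\mathbb{R}_{+}^{n+1}$ in the definition of $g_{\lambda,\a}^{*}f(x)$ into the cone $\Gamma_{1}(x)$ and the dyadic annular cones $\Gamma_{2^{k}}(x)\setminus \Gamma_{2^{k-1}}(x)$ for $k\ge 1$. On $\Gamma_{2^{k}}(x)\setminus \Gamma_{2^{k-1}}(x)$ we have $|x-y|\sim 2^{k}t$, so the weight factor satisfies $\bigl(\tfrac{t}{t+|x-y|}\bigr)^{n\lambda}\lesssim 2^{-k n\lambda}$. Integrating and then taking square roots yields the pointwise estimate
\[
g_{\lambda,\a}^{*}f(x)\lesssim G_{\a}(f)(x)+\sum_{k=1}^{\infty}2^{-\frac{kn\lambda}{2}}G_{\a,2^{k}}(f)(x).
\]

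Second, I would invoke the pointwise comparison $G_{\a,\beta}(f)(x)\leq\beta^{\frac{3n}{2}+\a}G_{\a}(f)(x)$ already recorded in the introduction (see \cite{Wilson1}). Applying this with $\beta=2^{k}$ gives
\[
g_{\lambda,\a}^{*}f(x)\lesssim G_{\a}(f)(x)\sum_{k=0}^{\infty}2^{k\left(\frac{3n}{2}+\a-\frac{n\lambda}{2}\right)}.
\]
The geometric series converges precisely under the hypothesis $\lambda>3+\frac{2\a}{n}$; this is the only place where this numerical restriction is used, and hence is the natural source of the constraint in the statement.

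Third, having established the pointwise inequality $g_{\lambda,\a}^{*}f(x)\lesssim G_{\a}(f)(x)$, I would simply take $M^{\Phi,\varphi_2}_{w}$-norms of both sides and apply Theorem \ref{thm4.4.} to $G_{\a}$, which gives the boundedness from $M^{\Phi,\varphi_1}_{w}(\Rn)$ to $M^{\Phi,\varphi_2}_{w}(\Rn)$ under the Zygmund-type assumption \eqref{eq3.6.VZPot} and $w\in A_{p_{0}}$. I do not anticipate a serious obstacle: the whole argument is a reduction, and the only care needed is in verifying the annular estimate on $(t/(t+|x-y|))^{n\lambda}$ and confirming that the geometric series assembles into a finite constant, both of which are routine.
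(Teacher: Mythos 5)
Your argument is correct, but it is organized differently from the paper's proof, and the difference is worth recording. Both proofs begin identically: split $\mathbb{R}^{n+1}_{+}$ into $\Gamma(x)$ and dyadic annular cones, bound $\bigl(\tfrac{t}{t+|x-y|}\bigr)^{n\lambda}$ by $2^{-kn\lambda}$ on the $k$-th annulus, and arrive at the domination of $g^{*}_{\lambda,\a}f$ by $G_{\a}f$ plus the series $\sum_{k}2^{-kn\lambda/2}G_{\a,2^{k}}f$ (this is \eqref{wueq7} in the paper). From there you diverge: you apply Wilson's aperture inequality \emph{pointwise} with $\beta=2^{k}$, sum the geometric series under $\lambda>3+\tfrac{2\a}{n}$ to get $g^{*}_{\lambda,\a}f(x)\lesssim G_{\a}f(x)$, and then invoke Theorem \ref{thm4.4.} once. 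The paper instead keeps each aperture separate in norm: for each $j$ it redoes the local splitting $f=f_1+f_2$ on a ball $B(x_0,r)$, controls $G_{\a,2^{j}}f_1$ via the norm form of Wilson's estimate (Lemma \ref{wulem2.3}) together with \eqref{ves2fd}, handles the tail $f_2$ by a direct kernel computation yielding the factor $2^{3jn/2}$, applies the Hardy-operator Theorem \ref{thm3.2.} to each $j$ separately (obtaining \eqref{wueq12}), and only then sums the series. Your route is shorter and loses nothing, \emph{provided} one accepts the aperture comparison $G_{\a,\beta}f(x)\le\beta^{\frac{3n}{2}+\a}G_{\a}f(x)$ as a genuinely pointwise inequality with that exact exponent; since the paper itself asserts this in the introduction (citing \cite{Wilson1}) and derives Lemma \ref{wulem2.3} from it, this reliance is consistent with the paper's framework, and the exponent $\frac{3n}{2}+\a$ is exactly what makes your series converge under the stated restriction on $\lambda$. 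The paper's longer route has the mild structural advantage that the tail term $f_2$ is estimated directly without any aperture-change lemma, so the pointwise Wilson inequality is only needed for the compactly supported piece; but as a reduction to Theorem \ref{thm4.4.} your proof is complete.
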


In \cite{Wilson1},
the author proved that the functions $G_{\a}f$ and $g_{\a}f$
are pointwise comparable.
Thus, as a consequence of Theorem \ref{thm4.4.},
we have the following result:
\begin{cor}\label{wucor1.5}
Let $\a\in(0,1]$ and $1<p_0\le p_1<\i$.
Let also $\Phi$ be a Young function
which is lower type $p_0$ and upper type $p_1$.
Assume in addition that $w \in A_{p_0}$ and
that the functions
$\varphi_1,\varphi_2:{\mathbb R}^n \times (0,\infty) \to (0,\infty)$
and $\Phi$ satisfy
the condition \eqref{eq3.6.VZPot}.
Then $g_{\a}$ is bounded
from $M^{\Phi,\varphi_1}_{w}(\Rn)$ to $M^{\Phi,\varphi_2}_{w}(\Rn)$.
\end{cor}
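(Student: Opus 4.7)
The plan is to reduce Corollary \ref{wucor1.5} to Theorem \ref{thm4.4.} via the pointwise comparability between $g_{\a}f$ and $G_{\a}f$ proved by Wilson in \cite{Wilson1}. Concretely, Wilson's result supplies constants $0<c\le C<\infty$, depending only on $n$ and $\a$, such that
\[
c\,G_{\a}f(x) \le g_{\a}f(x) \le C\,G_{\a}f(x)
\]
for every $f\in L^{1,\rm loc}(\Rn)$ and a.e.\ $x\in\Rn$. Since all ingredients in the statement of Corollary \ref{wucor1.5} match those of Theorem \ref{thm4.4.}, nothing new about weights, Orlicz structure, or the admissible $\varphi_1,\varphi_2$ needs to be checked.

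The second step is to observe that the generalized weighted Orlicz--Morrey norm $\|\cdot\|_{M^{\Phi,\varphi_2}_{w}}$ is monotone with respect to the pointwise order of nonnegative functions. Indeed, both the Luxemburg norm $\|\cdot\|_{L^{\Phi}_{w}(B(x,r))}$ and the outer supremum in the definition are monotone in $|f|$, because $\Phi$ is nondecreasing. Consequently, the pointwise estimate $g_{\a}f \le C\,G_{\a}f$ upgrades to
\[
\|g_{\a}f\|_{M^{\Phi,\varphi_2}_{w}} \le C\,\|G_{\a}f\|_{M^{\Phi,\varphi_2}_{w}}.
\]

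Finally, I would invoke Theorem \ref{thm4.4.}, whose hypotheses (the lower/upper type of $\Phi$, the Muckenhoupt condition $w\in A_{p_0}$, and the Zygmund-type integral inequality \eqref{eq3.6.VZPot}) are exactly those assumed in Corollary \ref{wucor1.5}, to conclude
\[
\|G_{\a}f\|_{M^{\Phi,\varphi_2}_{w}} \le C'\,\|f\|_{M^{\Phi,\varphi_1}_{w}}.
\]
Combining the two displayed estimates yields the desired bound for $g_{\a}$. There is no real obstacle in this argument: the only subtle point is recalling that Wilson's equivalence is valid \emph{pointwise} (so it survives passage to any norm monotone in $|f|$), and that the Orlicz--Morrey norm indeed has this monotonicity, which is immediate from the definitions given above.
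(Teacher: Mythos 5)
Your argument is exactly the paper's: Wilson's pointwise comparability $g_{\a}f\approx G_{\a}f$, the monotonicity of the $M^{\Phi,\varphi_2}_{w}$ norm, and an appeal to Theorem \ref{thm4.4.}. The proposal is correct and matches the paper's (one-line) derivation, merely making the norm-monotonicity step explicit.
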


Let $b$ be a locally integrable function on $\mathbb{R}^{n}$.
Setting
$$
A_{\a,b}f(t,y)\equiv
\sup_{\phi\in C_{\a}}\left|\int_{\mathbb{R}^{n}}[b(y)-b(z)]
\phi_{t}(y-z)f(z)dz\right|,
$$
we can define the commutators
$[b,G_{\a}]$, $[b,g_{\a}]$ and $[b,g_{\lambda,\a}^{*}]$ by;
\begin{align*}
[b,G_{\a}]f(x)&\equiv
\left(\iint_{\Gamma(x)}(A_{\a,b}f(t,y))^{2}
\frac{dydt}{t^{n+1}}\right)^{\frac{1}{2}}\\
[b,g_{\a}]f(x)&\equiv
\left(\int_{0}^{\i}(A_{\a,b}f((t,x))^{2}\frac{dt}{t}\right)^{\frac{1}{2}}
\\
[b,g_{\lambda,\a}^{*}]f(x)
&\equiv
\left(\iint_{\mathbb{R}_{+}^{n+1}}
\left(\frac{t}{t+|x-y|}\right)^{\lambda n}
(A_{\a,b}f(t,y))^{2}\frac{dydt}{t^{n+1}}\right)^{\frac{1}{2}},
\end{align*}
respectively.
A function $f\in L^{1,\rm loc}(\mathbb{R}^{n})$ is said 
to be in ${\rm BMO}(\mathbb{R}^{n})$ \tcr{\cite{JohnNirenberg}} if 
$$
\Vert f\Vert_{*}
\equiv
\sup_{x\in \mathbb{R}^{n},r>0}
\frac{1}{|B(x,r)|}\int_{B(x,r)}|f(y)-f_{B(x,r)}|dy<\i,
$$
where $f_{B(x,r)}\equiv \displaystyle \frac{1}{|B(x,r)|}\int_{B(x,r)}f(y)dy$.

About the bounededness of $[b,G_{\a}]$ on Orlicz spaces, we shall invoke the following \tcr{result}:
\begin{thm}{\rm \cite{LiNaYaZh}}\label{LiNaYaZhprop2.17-1}
Let $\a\in(0,1]$, $1<p_0\le p_1<\i$ and $b\in {\rm BMO}(\Rn)$.
Let
$\Phi$ be a Young function which is lower type $p_0$ and upper type $p_1$
and $w \in A_{p_0}$.
Then $[b,G_{\a}]$ is bounded on $L^{\Phi}_{w}(\mathbb{R}^{n})$.
\end{thm}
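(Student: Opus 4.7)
The plan is to reduce the claim to a weighted $L^p$ boundedness for $[b,G_\a]$ and then lift the result to the Orlicz scale via an extrapolation argument.

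\textbf{Step 1: Weighted $L^p$ bound.} Wilson's weighted version of Theorem~\ref{LiNaYaZhprop2.11-1} yields that $G_\a$ is bounded on $L^p_w(\Rn)$ for every $1<p<\i$ and every $w \in A_p$. For $b \in {\rm BMO}(\Rn)$, I would then establish the sharp maximal function estimate
\begin{equation*}
M^{\#}\bigl([b,G_\a]f\bigr)(x) \le C \|b\|_{*} \bigl( M_r(G_\a f)(x) + M_r f(x)\bigr),
\end{equation*}
valid for any fixed $r>1$, where $M_r g = [M(|g|^r)]^{1/r}$. Granted this, the Fefferman-Stein sharp function inequality for $A_\i$ weights delivers $\|[b,G_\a]f\|_{L^p_w(\Rn)}\le C\|b\|_{*}\|f\|_{L^p_w(\Rn)}$ for every $1<p<\i$ and $w \in A_p$, via the standard Alvarez-Bagby-Kurtz-P\'erez scheme for commutators.

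\textbf{Step 2: Extrapolation to $L^\Phi_w$.} Since $\Phi$ has lower type $p_0>1$ and upper type $p_1<\i$ and $w \in A_{p_0}$, the family of weighted $L^p$ estimates from Step~1 can be extrapolated to $L^\Phi_w(\Rn)$ by the Orlicz-space version of Rubio de Francia extrapolation: the lower-type and upper-type hypotheses provide precisely the control on the Boyd-type indices of $L^\Phi$ that is needed to run the machinery, giving the desired conclusion.

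\textbf{Main obstacle.} The sharp maximal inequality in Step~1 is the heart of the argument. Because $G_\a$ is a nonlinear, $L^2$-valued expression built out of the supremum $A_{\a,b}f$, one cannot reduce $[b,G_\a]$ pointwise to $bG_\a f-G_\a(bf)$. Instead, fixing a ball $B$ containing $x$, one subtracts the constant $b_B$ inside $A_{\a,b}f$ and applies Minkowski's inequality on $\Gamma(x)$ to bound $[b,G_\a]f(y)$ pointwise by $|b(y)-b_B|\,G_\a f(y)+G_\a\bigl((b-b_B)f\bigr)(y)$. Writing $f=f\chi_{2B}+f\chi_{(2B)^c}$, the local contribution is handled by the $L^r$-boundedness of $G_\a$ together with a John-Nirenberg control of the $L^{r'}$-oscillation of $b-b_B$, while the nonlocal contribution requires exploiting the compact support and the Lipschitz-$\a$ cancellation of the kernels $\phi\in C_\a$ to generate geometric decay in the dyadic annuli $2^{k+1}B\setminus 2^k B$, which absorbs the at-most-logarithmic growth $\|b_{2^{k+1}B}-b_B\|\lesssim (k+1)\|b\|_{*}$.
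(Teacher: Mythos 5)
The paper does not actually prove Theorem \ref{LiNaYaZhprop2.17-1}: it is quoted from \cite{LiNaYaZh} and used as a black box (to handle the local part $f_1$ in Lemma \ref{lem5.1.}). So your proposal has to stand on its own, and its two-step architecture --- a weighted $L^p$ bound for all $1<p<\i$ and $w\in A_p$ via a Fefferman--Stein sharp-function estimate, followed by Rubio de Francia extrapolation into $L^{\Phi}_{w}$ --- is a legitimate route and genuinely different from the Musielak--Orlicz arguments of \cite{LiNaYaZh}. The extrapolation step is fine in principle: by Remark \ref{remlowup}, lower type $p_0>1$ and upper type $p_1<\i$ are equivalent to $\Phi\in\Delta_2\cap\nabla_2$, which places the Boyd indices of $L^{\Phi}$ strictly between $1$ and $\i$, and $A_{p_0}$ sits inside the admissible weight class for the rearrangement-invariant extrapolation theorem of Cruz-Uribe, Martell and P\'erez.

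The gap is in your treatment of the ``main obstacle,'' which is indeed the heart of the matter. Splitting $b(y)-b(z)=(b(y)-b_B)+(b_B-b(z))$ inside $A_{\a,b}f(t,y)$ and applying Minkowski's inequality does \emph{not} give the pointwise bound $|b(\cdot)-b_B|\,G_{\a}f+G_{\a}((b-b_B)f)$: the oscillation factor $b(y)-b_B$ is attached to the cone variable $y$, which is integrated over $\Gamma(x)$, not to the base point $x$. What the first piece actually produces is the modified square function $\bigl(\iint_{\Gamma(x)}|b(y)-b_B|^2(A_{\a}f(t,y))^2\,t^{-n-1}\,dy\,dt\bigr)^{1/2}$, with the BMO oscillation living \emph{inside} the $dy\,dt$ integral; this is exactly the term ${\mathfrak A}$ that the paper's Lemma \ref{lem5.1.} has to work to control, and in the sharp-maximal setting it needs a separate argument (averaging in $x$ over the ball and exchanging the order of integration so that John--Nirenberg applies to $|b(y)-b_B|^2$ over $B(x,t)$, or recasting $G_{\a}$ as an $L^2$-valued Calder\'on--Zygmund operator). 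As written, your derivation of $M^{\#}([b,G_{\a}]f)\lesssim\|b\|_{*}(M_r(G_{\a}f)+M_rf)$ collapses at this point. The inequality itself is true and available in the literature (e.g.\ Wang's work \cite{Wang1} on intrinsic square functions on weighted Morrey spaces), so the strategy is repairable, but you must either prove that estimate correctly or cite it; the reduction you sketch is not valid.
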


About the commutator above,
we shall prove the following boundedness property
in the present paper:
\begin{thm}\label{3.4.XcomT}
Suppose that we are given parameters
$\a,p_0,p_1$
and functions
$b,w,\varphi,\varphi_2$ with the following properties:
\begin{enumerate}
\item
$\a\in(0,1], 1<p_0\le p_1<\i,$
\item
$b\in {\rm BMO}(\Rn)$
\item
$\Phi$ is a Young function which is lower type $p_0$ and upper type $p_1$.
\item
$w \in A_{p_0}$,
\item
$\varphi_1,\varphi_2$ and $\Phi$ satisfy the condition;
\begin{equation}\label{eq3.6.VZfrMaxcom}
\int_{r}^{\i}\Big(1+\ln \frac{t}{r}\Big)
\es_{t<s<\i}\frac{\varphi_1(x,s)\Phi^{-1}\big(w(B(x_0,t))^{-1}\big)}{\Phi^{-1}\big(w(B(x_0,s))^{-1}\big)}
\frac{dt}{t} \le C \, \varphi_2(x,r),
\end{equation}
where $C$ does not depend on $x$ and $r$.
\end{enumerate}
Then the operator $[b,G_{\a}]$ is bounded
from $M^{\Phi,\varphi_1}_{w}(\Rn)$ to $M^{\Phi,\varphi_2}_{w}(\Rn)$.
\end{thm}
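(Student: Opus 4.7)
The plan is to adapt the local-to-global scheme of Theorem~\ref{thm4.4.}, enhanced with the standard BMO decomposition $b(y)-b(z) = (b(y)-b_{B_0})+(b_{B_0}-b(z))$ of the commutator symbol and with an Orlicz--H\"older inequality driven by the John--Nirenberg inequality, which will be responsible for the additional logarithmic factor in \eqref{eq3.6.VZfrMaxcom}. The central target is the local estimate
\begin{equation*}
\|[b,G_{\a}]f\|_{L^{\Phi}_{w}(B_0)}
\lesssim
\frac{\|b\|_{*}}{\Phi^{-1}(w(B_0)^{-1})}
\int_{2r}^{\i}\Big(1+\ln\tfrac{t}{r}\Big)\,
\Phi^{-1}\bigl(w(B(x_0,t))^{-1}\bigr)\,
\|f\|_{L^{\Phi}_{w}(B(x_0,t))}\,\frac{dt}{t},
\end{equation*}
valid for every ball $B_0=B(x_0,r)$. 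Once this is in place, multiplying by $\varphi_2(x_0,r)^{-1}\Phi^{-1}(w(B_0)^{-1})$, taking $\sup_{x_0,r}$, and inserting $\|f\|_{L^{\Phi}_{w}(B(x_0,t))}\Phi^{-1}(w(B(x_0,t))^{-1})\le \varphi_1(x_0,t)\|f\|_{M^{\Phi,\varphi_1}_{w}}$ via the essential-infimum manipulation used in Theorem~\ref{thm4.4.}, condition \eqref{eq3.6.VZfrMaxcom} closes the argument.

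To obtain the local estimate, I would split $f=f_1+f_2$ with $f_1=f\chi_{2B_0}$. For the local piece, Theorem~\ref{LiNaYaZhprop2.17-1} gives $\|[b,G_{\a}]f_1\|_{L^{\Phi}_{w}(B_0)}\lesssim\|b\|_{*}\|f\|_{L^{\Phi}_{w}(2B_0)}$, which is absorbed into the tail integral on $(2r,\i)$ by the same trick as in Theorem~\ref{thm4.4.}. For the off-diagonal piece, inserting the BMO split yields
\begin{equation*}
A_{\a,b}f_2(t,y)\le |b(y)-b_{B_0}|\,A_{\a}f_2(t,y)+A_{\a}\bigl((b_{B_0}-b)f_2\bigr)(t,y),
\end{equation*}
so that, after squaring and integrating over $\Gamma(x)$,
\begin{equation*}
[b,G_{\a}]f_2(x)
\lesssim |b(x)-b_{B_0}|\,G_{\a}f_2(x)+ (\text{tangential correction}) + G_{\a}\bigl((b_{B_0}-b)f_2\bigr)(x).
\end{equation*}
Each of the two principal pieces is then bounded using the kernel-level dyadic-annular decomposition of $G_{\a}$ already developed in the proof of Theorem~\ref{thm4.4.}, which converts them into integrals over scales $t>2r$.

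The main obstacle is producing the $(1+\ln(t/r))$ weight. For the first piece, one applies the generalized H\"older inequality in weighted Orlicz spaces to $\|(b-b_{B_0})\,G_{\a}f_2\|_{L^{\Phi}_{w}(B_0)}$, and uses the John--Nirenberg inequality, which bounds the weighted exponential Orlicz norm of $b-b_{B_0}$ on $B_0$ by $C\|b\|_{*}$; this is where the assumption $w\in A_{p_0}\subset A_{\i}$ is used. For the second piece, one estimates $\|(b_{B_0}-b)f\|_{L^{\Phi}_{w}(B(x_0,t))}$ on each annular scale $t>r$ by the same Orlicz--H\"older argument, combined with the telescoping bound $|b_{B_0}-b_{B(x_0,t)}|\lesssim (1+\ln(t/r))\|b\|_{*}$ between the two mean values; here the factor $1+\ln(t/r)$ naturally appears and is ultimately inherited by the outer integral, matching exactly the kernel of \eqref{eq3.6.VZfrMaxcom}. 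Checking that the lower/upper type constants $p_0,p_1$ of $\Phi$ remain compatible with the Orlicz--H\"older and John--Nirenberg inputs at every scale is the technical heart of the proof; after this is handled, the globalization is a routine rerun of the argument of Theorem~\ref{thm4.4.}.
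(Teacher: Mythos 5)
Your architecture coincides with the paper's: the same local estimate (this is Lemma \ref{lem5.1.} in the paper), the same split $f=f_1+f_2$ with Theorem \ref{LiNaYaZhprop2.17-1} handling $f_1$, the same insertion of $b_{B_0}$, Orlicz--H\"older plus Lemma \ref{Bmo-orlicz} and the telescoping bound \eqref{propBMO} for the piece carrying $b_{B_0}-b(z)$, and the same globalization through \eqref{eq3.6.VZfrMaxcom} via Theorem \ref{thm3.2.}. The one genuine gap is the term you label ``tangential correction.'' Passing from the piece carrying $b(y)-b_{B_0}$ to $|b(x)-b_{B_0}|\,G_{\alpha}f_2(x)$ replaces $b$ at the running point $y$ of the cone $\Gamma(x)$ by its value at the vertex $x$, and the resulting error
\[
\left(\iint_{\Gamma(x)}|b(y)-b(x)|^2\,(A_{\alpha}f_2(t,y))^2\,\frac{dy\,dt}{t^{n+1}}\right)^{\frac{1}{2}}
\]
admits no pointwise control for $b\in{\rm BMO}(\Rn)$. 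Your subsequent H\"older/John--Nirenberg argument on $B_0$ only treats the factored term $\|(b-b_{B_0})G_{\alpha}f_2\|_{L^{\Phi}_{w}(B_0)}$ and never returns to this error, so as written the first piece is not estimated.

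The fix --- and what the paper actually does --- is to never pull $b$ out to the vertex. Keep $|b(y)-b_{B_0}|$ inside the cone integral: since $f_2$ vanishes on $2B_0$, the integrand is supported where $t\gtrsim r$, and $A_{\alpha}f_2(t,y)\lesssim t^{-n}\int_{B(x,2t)}|f|$ is independent of $y$, so the $y$-integration over the cross-section $\{|x-y|<t\}$ reduces to $\int_{|x-y|<t}|b(y)-b_{B_0}|^2\,dy\lesssim t^{n}\|b\|_{*}^2\big(1+\ln \tfrac{t}{r}\big)^2$ by Lemma \ref{rem2.4.}~(2) and \eqref{propBMO}. One is left with a square integral in $t$, which the paper linearizes by a dyadic decomposition in $t$ together with $\|\cdot\|_{\ell^2}\le\|\cdot\|_{\ell^1}$, recovering exactly the kernel $\big(1+\ln \tfrac{t}{r}\big)\Phi^{-1}\big(w(B(x_0,t))^{-1}\big)\frac{dt}{t}$. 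Note that this piece therefore also contributes a logarithm, contrary to your expectation that only the $b_{B_0}-b(z)$ piece does. With this replacement your proof closes; everything else matches the paper's argument.
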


In \cite{Wilson1}, the author proved
that the functions $G_{\a}f$ and $g_{\a}f$ are pointwise comparable.
From the definition of the commutators,
the same can be said for $[b,G_{\a}]$ and $[b,g_{\a}]$.
Thus, as a consequence of Theorem \ref{thm4.4.},
we have the following result:
\begin{cor}\label{wucor1.6}
Let $\a\in(0,1]$, $1<p_0\le p_1<\i$ and $b\in {\rm BMO}(\Rn)$.
Let $\Phi$ be a Young function which is lower type $p_0$ and upper type $p_1$.
Assume $w \in A_{p_0}$
and that the functions $\varphi_1,\varphi_2$ and $\Phi$
satisfy the condition \eqref{eq3.6.VZfrMaxcom}, then $[b,g_{\a}]$
is bounded from $M^{\Phi,\varphi_1}_{w}(\Rn)$ to $M^{\Phi,\varphi_2}_{w}(\Rn)$.
\end{cor}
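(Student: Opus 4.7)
The plan is to reduce Corollary \ref{wucor1.6} to Theorem \ref{3.4.XcomT} by a pointwise comparison argument, exactly as suggested by the author in the paragraph preceding the statement. The inner quantity $A_{\a,b}f(t,y)$ is literally the same in the definitions of $[b,g_{\a}]$ and $[b,G_{\a}]$; the two operators differ only in how this quantity is aggregated in the $(t,y)$-variables. Wilson's pointwise equivalence $g_{\a}f(x)\approx G_{\a}f(x)$ from \cite{Wilson1} rests on a geometric averaging of $A_{\a}f(t,y)^2$ in the $y$-variable, and this argument transfers to the commutator setting to yield
\[
[b,g_{\a}]f(x) \;\approx\; [b,G_{\a}]f(x), \qquad x\in\R^n.
\]

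Granting this pointwise comparison, the corollary follows in three lines. Since $\|\cdot\|_{M^{\Phi,\varphi_2}_w}$ is monotone with respect to pointwise inequalities between nonnegative functions, one simply chains
\[
\|[b,g_{\a}]f\|_{M^{\Phi,\varphi_2}_w}
\lesssim
\|[b,G_{\a}]f\|_{M^{\Phi,\varphi_2}_w}
\lesssim
\|f\|_{M^{\Phi,\varphi_1}_w},
\]
where the first inequality is the pointwise comparison above and the second is Theorem \ref{3.4.XcomT}, whose hypotheses coincide exactly with those of the corollary (in particular the integral condition \eqref{eq3.6.VZfrMaxcom} and the BMO assumption on $b$).

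The anticipated main obstacle, and the only genuinely non-trivial point, is verifying the pointwise comparison for the commutator. The subtlety is that multiplying the admissible kernel $\phi\in C_{\a}$ by the $y$-dependent factor $b(y)-b(z)$ produces an object that no longer lies in $C_{\a}$, so Wilson's averaging in \cite{Wilson1} does not apply verbatim. Nevertheless, starting from
\[
A_{\a,b}f(t,y) = \sup_{\phi\in C_{\a}}\left|\int_{\R^n}[b(y)-b(z)]\phi_t(y-z)f(z)\,dz\right|
\]
and tracing through Wilson's scheme, one observes that the geometric ingredients used there depend only on the support and Lipschitz properties of $\phi$ in the $z$-variable, which are preserved. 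No BMO input for $b$ is required at this pointwise stage; the BMO hypothesis enters the overall proof only through Theorem \ref{3.4.XcomT}. Once this transfer is in hand, the corollary is immediate.
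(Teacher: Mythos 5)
Your route is exactly the paper's: the entire published ``proof'' of Corollary \ref{wucor1.6} is the remark preceding it, namely that the pointwise comparability $g_{\a}f\approx G_{\a}f$ from \cite{Wilson1} ``can be said'' for the commutators as well, after which one invokes the boundedness of $[b,G_{\a}]$ (Theorem \ref{3.4.XcomT}; the paper's citation of Theorem \ref{thm4.4.} there is evidently a typo) and the monotonicity of the norm. So structurally you match the paper, and your verification that the hypotheses of Theorem \ref{3.4.XcomT} coincide with those of the corollary is correct.

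One caveat on the step you yourself single out as the only non-trivial one: your proposed resolution does not address the actual obstruction. The difficulty in transferring Wilson's argument is not the admissibility of the modified kernel --- rewriting $\phi_t(x-\cdot)$ as a constant multiple of $\psi_{2t}(y-\cdot)$ with $\psi\in C_{\a}$ for $|x-y|<t$ works exactly as in the linear case, since (as you note) only the support and Lipschitz properties in $z$ matter. The problem is that the commutator weight attached to $A_{\a,b}f(t,x)$ is $b(x)-b(z)$, whereas the one attached to $A_{\a,b}f(2t,y)$ is $b(y)-b(z)$; after re-centering one is left with a cross term
\[
|b(x)-b(y)|\left|\int_{\R^n}\psi_{2t}(y-z)f(z)\,dz\right|\lesssim |b(x)-b(y)|\,A_{\a}f(2t,y),
\]
which is not controlled pointwise by $[b,G_{\a}]f(x)$ for a general ${\rm BMO}$ function $b$. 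Hence the asserted pointwise equivalence $[b,g_{\a}]f\approx[b,G_{\a}]f$ requires more than the observation that no BMO input is needed; some additional argument (or a direct rerun of the decomposition in Lemma \ref{lem5.1.} for $[b,g_{\a}]$, where the cross term can be absorbed using the John--Nirenberg estimates) is needed. The paper is equally silent on this point, so your write-up is no less complete than the published one, but as a self-contained proof it has this gap.
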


\begin{rem}
By going through an argument similar to the above proofs and that
of Theorem \ref{thm4.4.x}, we can also show the boundedness
of $[b,g_{\lambda,\a}^{*}]$.
We omit the details.
\end{rem}

Here let us make a historical remark.
Wilson \cite{Wilson1} proved
that $G_{\a}$ is bounded on $L^{p}(\mathbb{R}^{n})$
for $1<p<\i$ and $0<\a\leq 1$.
After that,
Huang and Liu \cite{HuLiu} studied the boundedness
of intrinsic square functions on weighted Hardy spaces.
Moreover, they characterized the weighted Hardy spaces
by intrinsic square functions.
In \cite{Wang2} and \cite{WangLiu}, Wang and Liu obtained
some weak type estimates on weighted Hardy spaces.
In \cite{Wang1}, Wang considered intrinsic functions and
commutators generated by BMO functions on weighted Morrey spaces.
In \cite{WuX},
Wu proved the boundedness of intrinsic square functions and their commutators
 inspired by the ideas of Guliyev
\cite{GulDoc, GulBook, GulJIA, GULAKShIEOT2012}.
In \cite{LiNaYaZh}, Liang et al. studied the boundedness of these operators
on Musielak-Orlicz Morrey spaces.
Orlicz-Morrey spaces were initially introduced and studied
by Nakai in \cite{Nakai0}.
Also for the boundedness of the operators of harmonic analysis
on Orlicz-Morrey spaces, see also \cite{DerGulSam, HasJFSA, Nakai1, Nakai2, SawSugTan}.
Our definition of Orlicz-Morrey spaces (see \cite{DerGulSam}) is different from those
by Nakai \cite{Nakai0} and Sawano et al. \cite{SawSugTan} \tcr{used recently in  \cite{GalaRST2014}}.

Here and below, we use the following notations:
By $A \lesssim B$
we mean that $A \le C B$
with some positive constant $C$ independent of relavant quantities.
If $A \lesssim B$ and $B \lesssim A$, we write $A\approx B$ and
say that $A$ and $B$ are equivalent.

Finally, we descrive how we organize
the present paper.
In Section \ref{s2}
we recall some preliminary facts
such as Young functions and John-Nirenberg inequality.
Section \ref{s3} is devoted to the proof
of Theorems \ref{thm4.4.} and \ref{thm4.4.x}.
We prove Theorem \ref{3.4.XcomT} in Section \ref{s4}.

\section{Preliminaries}
\label{s2}

As is well known,
classical Morrey spaces stemmed from Morrey's observation
for the local behavior of solutions
to second order elliptic partial differential equations \cite{Morrey}.
We recall its definition:
\begin{equation*}
M_{p,\lambda}(\Rn) = \left\{ f \in L^{p,\rm loc}(\Rn) : \left\| f\right\|_{M_{p,\lambda}}: = \sup_{x \in \Rn, \; r>0 } r^{-\frac{\lambda}{p}} \|f\|_{L^{p}(B(x,r))} < \i \right\},
\end{equation*}
where $0 \le \lambda \le n,$ $1\le p < \i$.
The scale $M_{p,\l}(\Rn)$ covers the $L^p(\Rn)$
in the sense that $M_{p,0}(\Rn)=L^p(\Rn)$.

We are thus oriented to a generalization
of the parameters $p$ and $\lambda$.

\subsection{Young functions and Orlicz spaces}

We next recall the definition of Young functions.

\begin{defn}\label{def2}
A function $\Phi : [0,+\i) \rightarrow [0,\i]$ is called a Young function,
if $\Phi$ is convex, left-continuous,
$\lim\limits_{r\rightarrow +0} \Phi(r) = \Phi(0) = 0$
and
$\lim\limits_{r\rightarrow +\i} \Phi(r) = \i$.
\end{defn}
The convexity and the condition $\Phi(0) = 0$
force any Young function to be increasing.
In particular,
if there exists $s \in (0,+\i)$ such that $\Phi(s) = +\i$,
then it follows that $\Phi(r) = +\i$ for $r \geq s$.

Let $\mathcal{Y}$ be the set of all Young functions $\Phi$ such that
\begin{equation}\label{2.1}
0<\Phi(r)<+\i\qquad \text{for} \qquad 0<r<+\i
\end{equation}
If $\Phi \in \mathcal{Y}$,
then $\Phi$ is absolutely continuous on every closed interval in $[0,+\i)$
and bijective from $[0,+\i)$ to itself.

Orlicz spaces,
introduced in \cite{Orlicz1, Orlicz2},
also
generalize Lebesgue spaces. 
They are useful tools in harmonic analysis and
these spaces are applied to many other problems in harmonic analysis.
For example,
the Hardy-Littlewood maximal operator is bounded on $L^p({\mathbb R}^n)$
 for $1 < p < \i$,
but not on $L^1({\mathbb R}^n)$.
Using Orlicz spaces, we can investigate the boundedness of the maximal operator near $p = 1$ more precisely.

In the present paper we are concerned with the weighted setting.
\begin{defn}[Weighted Orlicz Space]\label{ttss}
For a Young function $\Phi$
and a non-negative measurable function $w$ on $\Rn$,
the set
$$L^{\Phi}_{w}(\Rn)
\equiv
\left\{f\in L^{\Phi,\rm loc}_{w}(\Rn):
\int_{\Rn}\Phi(k|f(x)|) w(x)dx<+\i \text{ for some $k>0$ }\right\}
$$
is called the weighted Orlicz space.
The local weighted Orlicz space $L^{\Phi,\rm loc}_{w}(\Rn)$
is defined as the set of all functions $f$
such that $f\chi_{_B}\in L^{\Phi}_{w}(\Rn)$ for all balls $B \subset \Rn$
and this space is endowed with the natural topology.
\end{defn}
Note that $L^{\Phi}_{w}(\Rn)$ is a Banach space
with respect to the norm
$$
\|f\|_{L^{\Phi}_{w}}
\equiv
\inf\left\{\lambda>0:
\int_{\Rn}\Phi\Big(\frac{|f(x)|}{\lambda}\Big)w(x)dx\leq 1\right\}.
$$
See \cite[Section 3, Theorem 10]{RaoRen}  for example.
In particular, we have
$$
\int_{\Rn}\Phi\Big(\frac{|f(x)|}{\|f\|_{L^{\Phi}_{w}}}\Big)w(x)dx\leq 1.
$$

If $\Phi(r)=r^{p},\, 1\le p<\i$,
then $L^{\Phi}_{w}=L^{p}_{w}(\Rn)$
with norm coincidence.
If $\Phi(r)=0,\,(0\le r\le 1)$
and
$\Phi(r)=\i,\,(r> 1)$, then $L^{\Phi}_{w}=L^{\i}_{w}(\Rn)$.

For a Young function $\Phi$ and $0 \leq s \leq +\i$, let
$$
\Phi^{-1}(s)
\equiv
\inf\{r\geq 0: \Phi(r)>s\}\qquad (\inf\emptyset=+\i).$$
If $\Phi \in \mathcal{Y}$,
then $\Phi^{-1}$ is the usual inverse function of $\Phi$.
We also note that
\begin{equation}\label{younginverse}
\Phi(\Phi^{-1}(r))
\leq r \leq
\Phi^{-1}(\Phi(r)) \quad \text{ for } 0\leq r<+\i.
\end{equation}
A Young function $\Phi$ is said to satisfy the $\Delta_2$-condition, denoted by $\Phi \in \Delta_2$, if
$$
\Phi(2r)\le k\Phi(r) \text{  for } r>0
$$
for some $k>1$.
If $\Phi \in \Delta_2$, then $\Phi \in \mathcal{Y}$.
A Young function $\Phi$ is said to satisfy the $\nabla_2$-condition,
denoted also by $\Phi \in \nabla_2$, if
$$\Phi(r)\leq \frac{1}{2k}\Phi(kr),\qquad r\geq 0,$$
for some $k>1$.
The function $\Phi(r) = r$ satisfies the $\Delta_2$-condition
and it fails the $\nabla_2$-condition.
If $1 < p < \i$,
then $\Phi(r) = r^p$ satisfies both the conditions.
The function $\Phi(r) = e^r - r - 1$ satisfies the
$\nabla_2$-condition but it fails the $\Delta_2$-condition.

\begin{defn}
A Young function $\Phi$ is said to be
of upper type $p$ (resp. lower type $p$) for some $p\in[0,\i)$,
if there exists a positive constant $C$ such that,
for all $t\in[1,\i)$ $($resp. $t\in[0,1]$ $)$ and $s\in[0,\i)$,
$$
\Phi(st)\le Ct^p\Phi(s).
$$
\end{defn}

\begin{rem}\label{remlowup}
If $\Phi$ is lower type $p_0$ and upper type $p_1$ with $1<p_0\le p_1<\i$,
then $\Phi\in \Delta_2\cap\nabla_2$.
Conversely if $\Phi\in \Delta_2\cap\nabla_2$,
then $\Phi$ is lower type $p_0$ and upper type $p_1$ with $1<p_0\le p_1<\i$;
see \cite{KokKrbec} for example.
\end{rem}

About the norm $\|f\|_{M^{\Phi,\varphi}_{w}}$,
we have the following equivalent expression:
If $\Phi$ satisfies the $\Delta_2$-condition,
then the norm $\|f\|_{M^{\Phi,\varphi}_{w}}$ is equivalent
to the norm
\begin{align*}
\|f\|_{\overline{M}^{\Phi,\varphi}(w)}
\equiv \inf\Big\{\lambda>0 : &
\sup\limits_{x\in\Rn, r>0} \varphi(x,r)^{-1} \, \Phi^{-1}\big(w(B(x,r))^{-1}\big)
\\
& \times \int\limits_{B(x,r)}\Phi\Big(\frac{|f(x)|}{\lambda}\Big)w(x)dx \le 1 \Big\}.
\end{align*}
See \cite[p. 416]{MizNakaiOhnoShim}.
The latter was used in \cite{MizNakaiOhnoShim, Nakai1, Nakai2, SawSugTan},
see also references therein.
%
%
%
%
For $\Phi$ and $\widetilde{\Phi}$,
we have the following estimate,
whose proof is similar to \cite[Lemmas 4.2]{Ky1}.
So, we omit the details.

\begin{lem} \label{Kylowupp}
Let $0<p_0\le p_1<\i$ and let $\widetilde{C}$ be a positive constant.
Suppose that we are given
a non-negative measurable function $w$ on $\Rn$
and a Young function $\Phi$ which is lower type $p_0$ and upper type $p_1$.
Then there exists a positive constant $C$ such that
for any ball $B$ of $\Rn$ and $\mu\in(0,\i)$
$$\int_{B}\Phi\left(\frac{|f(x)|}{\mu}\right) w(x) dx\le \widetilde{C}$$
implies that $\|f\|_{L^{\Phi}_{w}(B)}\le C\mu$.
\end{lem}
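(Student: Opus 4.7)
The plan is to use only the lower type $p_0$ property of $\Phi$ to rescale the given modular inequality into the unit bound that defines the Luxemburg norm. Let $C_0$ denote the constant in the lower-type inequality, so that $\Phi(ts) \le C_0\, t^{p_0}\,\Phi(s)$ for every $0 < t \le 1$ and $s \ge 0$. I would set $\lambda := C\mu$, where $C \ge 1$ is a constant to be pinned down at the end of the argument.

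Since $C \ge 1$ forces $1/C \le 1$, applying the lower-type inequality pointwise with $t = 1/C$ and $s = |f(x)|/\mu$ yields
\[
\Phi\!\left(\frac{|f(x)|}{\lambda}\right)
= \Phi\!\left(\frac{1}{C}\cdot\frac{|f(x)|}{\mu}\right)
\le C_0\, C^{-p_0}\, \Phi\!\left(\frac{|f(x)|}{\mu}\right).
\]
Integrating against $w$ over $B$ and inserting the hypothesis $\int_B \Phi(|f(x)|/\mu)\,w(x)\,dx \le \widetilde{C}$ produces
\[
\int_B \Phi\!\left(\frac{|f(x)|}{\lambda}\right) w(x)\, dx \le C_0\,\widetilde{C}\, C^{-p_0}.
\]
Now I would choose $C := \max\{1,\,(C_0 \widetilde{C})^{1/p_0}\}$, which makes the right-hand side at most $1$. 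By the very definition of the Luxemburg norm, this gives $\|f\|_{L^{\Phi}_{w}(B)} \le \lambda = C\mu$, which is the desired conclusion.

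There is no real obstacle here; the argument is simply calibration of the modular and uses only the lower-type condition. The upper-type exponent $p_1$ would enter only in the converse direction, where one wishes to control the modular from above by a power of the norm. The single point that must be monitored is keeping $C \ge 1$, so that the lower-type rescaling is legitimate; this is precisely the role of the $\max$ in the definition of $C$. In the trivial case $\widetilde{C} \le 1$, one may just take $C=1$ without invoking any rescaling.
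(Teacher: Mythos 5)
Your argument is correct and complete: rescaling the modular via the lower type $p_0$ inequality and then choosing $C=\max\{1,(C_0\widetilde{C})^{1/p_0}\}$ so that the Luxemburg definition applies is precisely the standard argument, and it matches the proof the paper omits (the paper only points to Lemma 4.2 of Ky's article, which proceeds in the same way). Your remark that the upper type exponent $p_1$ is not needed for this direction is also accurate.
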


For a Young function $\Phi$,
the complementary function $\widetilde{\Phi}(r)$ is defined by
\begin{equation}\label{2.2}
\widetilde{\Phi}(r)
\equiv
\left\{
\begin{array}{ccc}
\sup\{rs-\Phi(s): s\in [0,\i)\}
& \mbox{ if } & r\in [0,\i), \\
+\i&\mbox{ if }& r=+\i.
\end{array}
\right.
\end{equation}
The complementary function $\widetilde{\Phi}$
is also a Young function and
it satisfies $\widetilde{\widetilde{\Phi}}=\Phi$.
Here we recall three examples.
\begin{ex}
\
\begin{enumerate}
\item
If $\Phi(r)=r$, then $\widetilde{\Phi}(r)=0$ for $0\leq r \leq 1$
and $\widetilde{\Phi}(r)=+\i$ for $r>1$.
\item
If $1 < p < \i$, $1/p+1/p^\prime= 1$ and $\Phi(r) =r^p/p$,
then $\widetilde{\Phi}(r) = r^{p^\prime}/p^\prime$.
\item
If $\Phi(r) = e^r-r-1$, then a calculation shows
$\widetilde{\Phi}(r) = (1+r) \log(1+r)-r.$
\end{enumerate}
\end{ex}
Note that $\Phi \in \nabla_2$ if and only if $\widetilde{\Phi} \in \Delta_2$.
It is also known that
\begin{equation}\label{2.3}
r\leq \Phi^{-1}(r)\widetilde{\Phi}^{-1}(r)\leq 2r
\qquad \text{for } r\geq 0.
\end{equation}
Note that Young functions satisfy the properties;
\begin{equation} \label{sam1}
\Phi(\a t)\leq \a \Phi(t)
\end{equation}
for all $0\le\a\le1$ and $0 \le t < \i$, and
\begin{equation} \label{sam2}
\Phi(\beta t)\geq \beta \Phi(t)
\end{equation}
for all $\beta>1$ and $0 \le t < \i$.

The following analogue of the H\"older inequality is known,
see \cite{Weiss}.
\begin{thm}{\rm \cite{Weiss}} \label{HolderOr}
For a non-negative measurable function $w$ on $\Rn$, a Young function $\Phi$
and its complementary function $\widetilde{\Phi}$,
the following inequality is valid
for all measurable functions $f$ and $g$:
$
\|fg\|_{L^{1}(\Rn)} \leq 2
\|f\|_{L^{\Phi}_{w}}
\|w^{-1}g\|_{L^{\widetilde{\Phi}}_{w}}.
$
\end{thm}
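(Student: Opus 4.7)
The plan is to reduce the inequality to the pointwise Young-type estimate
$$ab \le \Phi(a) + \widetilde{\Phi}(b), \qquad a,b \ge 0,$$
which follows directly from the definition \eqref{2.2} of the complementary function.

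After disposing of the trivial cases in which one of the norms on the right-hand side equals $0$ or $+\infty$, set $\lambda \equiv \|f\|_{L^{\Phi}_{w}}$ and $\mu \equiv \|w^{-1}g\|_{L^{\widetilde{\Phi}}_{w}}$ and assume both are positive and finite. Apply Young's inequality pointwise with $a = |f(x)|/\lambda$ and $b = w(x)^{-1}|g(x)|/\mu$, then multiply through by $w(x)$. Since $b \cdot w(x) = |g(x)|/\mu$ on the left-hand side, this yields
$$\frac{|f(x)g(x)|}{\lambda\mu} \le \Phi\!\left(\frac{|f(x)|}{\lambda}\right) w(x) + \widetilde{\Phi}\!\left(\frac{w(x)^{-1}|g(x)|}{\mu}\right) w(x).$$
Integrating over $\Rn$ and invoking the Luxemburg-norm estimate recorded just after Definition \ref{ttss} (applied to $f$ with respect to $\Phi$ in the weighted measure $w\,dx$, and to $w^{-1}g$ with respect to $\widetilde{\Phi}$ in the same measure), each of the two resulting integrals is bounded by $1$, so
$$\int_{\Rn} \frac{|f(x)g(x)|}{\lambda\mu}\,dx \le 2,$$
which rearranges to the desired bound $\|fg\|_{L^{1}(\Rn)} \le 2\lambda\mu$.

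There is no genuine obstacle: the argument is a direct weighted adaptation of the classical H\"older inequality in Orlicz spaces, the weight being absorbed by pairing $f$ with $w^{-1}g$ against the measure $w\,dx$. The only feature worth flagging is the constant $2$, which originates from summing the two unit Luxemburg bounds and cannot be replaced by $1$ within this purely Young-inequality-based argument; a sharper constant could in principle be obtained via the relation $r \le \Phi^{-1}(r)\widetilde{\Phi}^{-1}(r) \le 2r$ recorded in \eqref{2.3}, but this is unnecessary for the applications that follow.
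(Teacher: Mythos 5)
Your proof is correct. The paper itself gives no proof of Theorem \ref{HolderOr} --- it is simply quoted from the reference [Weiss] --- and your argument (Young's inequality $ab\le\Phi(a)+\widetilde{\Phi}(b)$ applied to $|f|/\lambda$ and $w^{-1}|g|/\mu$, followed by the unit modular bound for the Luxemburg norm in the measure $w\,dx$) is exactly the standard derivation one would find there, including the origin of the constant $2$.
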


An analogy of Theorem \ref{HolderOr} for weak type spaces is available.
If we define
\[
\|f\|_{WL^{\Phi}_{w}}
\equiv
\sup_{\lambda>0}\lambda\|\chi_{\{|f|>\lambda\}}\|_{L^{\Phi}_{w}},
\]
we can prove the following by a direct calculation:
\tcr{
\begin{cor}\label{lem4.0}
Let $\Phi$ be a Young function and
let $B$ be a measurable set in $\Rn$. Then
$
\|\chi_{_B}\|_{WL^{\Phi}_{w}}
= \|\chi_{_B}\|_{L^{\Phi}_{w}}
= \frac{1}{\Phi^{-1}\left(w(B)^{-1}\right)}.
$
\end{cor}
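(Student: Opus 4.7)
The plan is to prove both equalities by direct computation from the definitions, starting with the Luxemburg norm and then handling the weak norm as a trivial consequence.

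First, I compute $\|\chi_B\|_{L^{\Phi}_w}$. Plugging $f=\chi_B$ into the Luxemburg-norm definition gives
\[
\|\chi_B\|_{L^{\Phi}_w}
=\inf\Bigl\{\lambda>0:\Phi(1/\lambda)\,w(B)\le 1\Bigr\}
=\inf\Bigl\{\lambda>0:\Phi(1/\lambda)\le w(B)^{-1}\Bigr\}.
\]
Substituting $\mu=1/\lambda$ turns this into $1/\sup\{\mu>0:\Phi(\mu)\le w(B)^{-1}\}$. Using the formula $\Phi^{-1}(s)=\inf\{r\ge 0:\Phi(r)>s\}$ together with the monotonicity of $\Phi$, one checks that $\Phi(\mu)\le s$ for $\mu<\Phi^{-1}(s)$ and $\Phi(\mu)>s$ for $\mu>\Phi^{-1}(s)$, so the supremum equals $\Phi^{-1}(w(B)^{-1})$. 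Hence
\[
\|\chi_B\|_{L^{\Phi}_w}=\frac{1}{\Phi^{-1}\bigl(w(B)^{-1}\bigr)}.
\]

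Next, for the weak norm, I exploit that $\chi_B$ takes only the values $0$ and $1$. Indeed, for $0<\lambda<1$ we have $\{x:\chi_B(x)>\lambda\}=B$, while for $\lambda\ge 1$ the level set is empty, so $\|\chi_{\{\chi_B>\lambda\}}\|_{L^{\Phi}_w}=0$. Consequently
\[
\|\chi_B\|_{WL^{\Phi}_w}
=\sup_{0<\lambda<1}\lambda\,\|\chi_B\|_{L^{\Phi}_w}
=\|\chi_B\|_{L^{\Phi}_w},
\]
which proves the second equality and finishes the proof.

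There is essentially no obstacle: the argument is a bookkeeping exercise with the definitions of $\Phi^{-1}$ and of the Luxemburg norm. The only point that requires a moment's care is justifying the passage $\sup\{\mu:\Phi(\mu)\le s\}=\Phi^{-1}(s)$, which uses the left-continuity and monotonicity of $\Phi$ given in Definition~\ref{def2}; once that is noted, both equalities fall out immediately.
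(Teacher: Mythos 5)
Your proof is correct, and it is precisely the "direct calculation" that the paper alludes to (the paper states Corollary \ref{lem4.0} without writing out a proof). Both the identification $\sup\{\mu:\Phi(\mu)\le s\}=\inf\{r:\Phi(r)>s\}=\Phi^{-1}(s)$ via monotonicity and the reduction of the weak norm to the level sets $\{\chi_B>\lambda\}=B$ for $\lambda<1$ and $\emptyset$ for $\lambda\ge 1$ are exactly the intended bookkeeping.
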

}

In the next sections where we prove our main estimates,
we need the following lemma, which follows from Theorem \ref{HolderOr}. 
\begin{cor}
For a non-negative measurable function $w$ on $\Rn$,
a Young function $\Phi$ and a ball $B=B(x,r)$,
the following inequality is valid:
$$
\|f\|_{L^{1}(B)} \leq 2  \Big\|\frac{1}{w}\Big\|_{L^{\widetilde{\Phi}}_{w}(B)} \, \|f\|_{L^{\Phi}_{w}(B)}.
$$
\end{cor}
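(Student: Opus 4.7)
The plan is to deduce this corollary as an immediate application of Theorem \ref{HolderOr} by specializing the test function $g$ to $\chi_{B}$. The idea is that the generalized H\"older inequality for weighted Orlicz spaces is stated on all of $\Rn$, but localizing to the ball $B$ is simply a matter of multiplying by $\chi_{B}$ and observing that $\Phi(0)=0$ (so truncation by $\chi_{B}$ is compatible with the Orlicz norm).

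Concretely, I would apply Theorem \ref{HolderOr} to the pair $(f\chi_{B},\,\chi_{B})$ in place of $(f,g)$. The left-hand side then becomes
\[
\|(f\chi_{B})\cdot\chi_{B}\|_{L^{1}(\Rn)}
= \int_{B}|f(x)|\,dx
= \|f\|_{L^{1}(B)}.
\]
On the right-hand side, the factor $\|f\chi_{B}\|_{L^{\Phi}_{w}}$ coincides with $\|f\|_{L^{\Phi}_{w}(B)}$ by the Luxemburg-norm definition (since $\Phi(0)=0$ forces the defining integral over $\Rn$ to reduce to the integral over $B$), and similarly $\|w^{-1}\chi_{B}\|_{L^{\widetilde{\Phi}}_{w}} = \|w^{-1}\|_{L^{\widetilde{\Phi}}_{w}(B)}$. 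Combining these identifications with the constant $2$ supplied by Theorem \ref{HolderOr} yields exactly the desired bound.

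There is essentially no obstacle here: the only point worth checking carefully is the identification of $\|h\chi_{B}\|_{L^{\Phi}_{w}}$ with $\|h\|_{L^{\Phi}_{w}(B)}$, which is a purely formal consequence of the definition of the weighted Luxemburg norm together with $\Phi(0)=0$. Hence the proof is a one-line specialization of the previously established weighted Orlicz H\"older inequality, and no new analytic work is required.
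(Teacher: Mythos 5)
Your proposal is correct and matches the paper's intent exactly: the paper states this corollary as an immediate consequence of Theorem \ref{HolderOr} without further detail, and specializing that inequality to the pair $(f\chi_{B},\chi_{B})$, together with the observation that $\Phi(0)=0$ makes $\|h\chi_{B}\|_{L^{\Phi}_{w}}=\|h\|_{L^{\Phi}_{w}(B)}$, is precisely the intended argument.
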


\begin{lem}\label{lemHold}
Let $\a\in(0,1]$ and $1<p_0\le p_1<\i$.
Let also $\Phi$ be a Young function
which is lower type $p_0$ and upper type $p_1$.
Assume in addition $w \in A_{p_0}$.
For a ball $B=B(x,r)$,
the following inequality is valid:
$$
\|f\|_{L^{1}(B)} \lesssim |B|
\Phi^{-1}\left(w(B)^{-1}\right) \|f\|_{L^{\Phi}_{w}(B)}.
$$
\end{lem}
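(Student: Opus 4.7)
The plan is to factor the desired inequality through the weighted Lebesgue space $L^{p_0}_{w}(B)$ by establishing the two-step estimate
\[
\|f\|_{L^{1}(B)} \lesssim |B|\,w(B)^{-1/p_0}\,\|f\|_{L^{p_0}_{w}(B)} \lesssim |B|\,\Phi^{-1}\!\bigl(w(B)^{-1}\bigr)\,\|f\|_{L^{\Phi}_{w}(B)}.
\]

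For the first inequality, split $|f|=|f|\,w^{1/p_0}\cdot w^{-1/p_0}$ and apply the classical Hölder inequality with exponents $p_0$ and $p_0'=p_0/(p_0-1)$. The second factor becomes $\bigl(\int_B w^{-1/(p_0-1)}\bigr)^{1/p_0'}$, which is controlled by the $A_{p_0}$ hypothesis: the defining inequality $\bigl(\tfrac{1}{|B|}\int_B w\bigr)\bigl(\tfrac{1}{|B|}\int_B w^{-1/(p_0-1)}\bigr)^{p_0-1}\leq [w]_{A_{p_0}}$ rearranges to $\int_B w^{-1/(p_0-1)}\lesssim |B|^{p_0/(p_0-1)}\,w(B)^{-1/(p_0-1)}$, so the factor is $\lesssim |B|\,w(B)^{-1/p_0}$.

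For the second inequality, normalize by setting $L:=\|f\|_{L^{\Phi}_{w}(B)}$ so that $\int_B \Phi(|f|/L)\,w\,dx\leq 1$, write $\rho:=\Phi^{-1}(w(B)^{-1})$, and introduce $F:=|f|/(L\rho)$. Split $\int_B F^{p_0}\,w\,dx$ at the threshold $F=1$. The region $\{F\leq 1\}$ contributes at most $w(B)$. On $\{F>1\}$, the lower-type $p_0$ hypothesis tells us that $u\mapsto \Phi(u)/u^{p_0}$ is essentially nondecreasing; applied with $u=\rho F\geq \rho$ this yields $\Phi(\rho F)\gtrsim F^{p_0}\Phi(\rho)=F^{p_0}/w(B)$, so $F^{p_0}\lesssim w(B)\,\Phi(\rho F)$ pointwise on that region. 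Integrating against $w$ and using $\int_B \Phi(\rho F)\,w\,dx=\int_B \Phi(|f|/L)\,w\,dx\leq 1$ bounds this contribution by $w(B)$ as well. Summing and undoing the substitution gives $\int_B |f|^{p_0}\,w\,dx\lesssim \rho^{p_0}\,w(B)\,L^{p_0}$, equivalently $\|f\|_{L^{p_0}_{w}(B)}\lesssim \rho\,w(B)^{1/p_0}\,L$; multiplying the two steps yields the claim.

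The main obstacle is the second step: one must translate the only available information on $\Phi$ — its lower-type exponent $p_0$ — into the sharp weighted-$L^{p_0}$ bound with the exact factor $\Phi^{-1}(w(B)^{-1})\,w(B)^{1/p_0}$. The splitting at $F=1$ is essential, since the lower-type comparison $\Phi(\rho F)\gtrsim F^{p_0}/w(B)$ is only effective when $\rho F\geq\rho$. Note that the upper-type hypothesis $p_1$ plays no role in this argument.
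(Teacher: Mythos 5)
Your proof is correct, but it follows a genuinely different route from the paper. The paper's proof is a two-line argument that invokes the Kerman--Torchinsky theorem on the boundedness of the Hardy--Littlewood maximal operator $M$ on $L^{\Phi}_{w}$ (valid under $w\in A_{p_0}$ and the lower type $p_0$ hypothesis), combined with the pointwise bound $Mf(x)\gtrsim |B|^{-1}\|f\|_{L^1(B)}$ for $x\in B$ and the identity $\|\chi_B\|_{L^{\Phi}_{w}(B)}=\Phi^{-1}(w(B)^{-1})^{-1}$ from Corollary 2.10. You instead factor through $L^{p_0}_{w}(B)$: a weighted H\"older inequality together with the defining $A_{p_0}$ inequality gives the first factor $|B|\,w(B)^{-1/p_0}$, and a normalization-plus-truncation argument exploiting the essential monotonicity of $\Phi(u)/u^{p_0}$ (i.e., the lower type $p_0$ condition) gives the embedding $\|f\|_{L^{p_0}_{w}(B)}\lesssim \Phi^{-1}(w(B)^{-1})\,w(B)^{1/p_0}\,\|f\|_{L^{\Phi}_{w}(B)}$. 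Your version is self-contained and more elementary, avoiding the weighted Orlicz maximal theorem entirely, at the cost of being longer; the paper's version is shorter given the cited machinery. One small point you should make explicit: your second step uses $\Phi(\rho)=w(B)^{-1}$ with $\rho=\Phi^{-1}(w(B)^{-1})$, whereas the paper's \eqref{younginverse} only guarantees $\Phi(\Phi^{-1}(t))\le t$ for general Young functions; this is harmless here because the type hypotheses force $\Phi\in\Delta_2\cap\nabla_2\subset\mathcal{Y}$ (Remark 2.7), so $\Phi$ is a continuous bijection of $[0,\infty)$ and $\Phi^{-1}$ is its genuine inverse. You are also right that the upper type $p_1$ is not needed in either argument.
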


\begin{proof}
We know that $M$ is bounded on $L^{\Phi}_w(B)$; see \cite{KermTorch}. Thus,
\[
\frac{\|f\|_{L^1(B)}}{|B|}
\|\chi_B\|_{L^\Phi_w(B)}
\le
\|Mf\|_{L^\Phi_w(B)}
\lesssim
\|f\|_{L^{\Phi}_{w}(B)}.
\]
\tcr{So, Lemma \ref{lemHold} is proved}.
\end{proof}

\subsection{Weighted Hardy operator}

We will use the following statement on the boundedness
of the weighted Hardy operator
$$
H^{\ast}_{w} g(t):=\int_t^{\i} g(s) w(s) ds,~ \ \ 0<t<\i,
$$
where $w$ is a weight.

The following theorem was proved in \cite{GulJMS2013}.
In \eqref{vav01} and \eqref{vav02} below,
it will be understood that $\frac{1}{\i}=0$ and $0 \cdot \i=0$.

\begin{thm}\label{thm3.2.}
Let $v_1$, $v_2$ and $w$ be weights on $(0,\i)$.
Assume that $v_1$ is bounded
outside a neighborhood of the origin.
Then the inequality
\begin{equation} \label{vav01}
\sup _{t>0} v_2(t) H^{\ast}_{w} g(t) \leq C \sup _{t>0} v_1(t) g(t)
\end{equation}
holds for some $C>0$
for all non-negative and non-decreasing $g$ on $(0,\i)$
if and only if
\begin{equation} \label{vav02}
B:= \sup _{t>0} v_2(t)
\int_t^{\i} \frac{w(s) ds}{\sup _{s<\tau<\i} v_1(\tau)}<\i.
\end{equation}
Moreover, the value $C=B$ is the best constant for \eqref{vav01}.
\end{thm}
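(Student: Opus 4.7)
The plan is to prove sufficiency and necessity separately, thereby showing that the best constant in \eqref{vav01} is exactly $B$. Throughout, I let $V(s) := \sup_{s<\tau<\infty} v_1(\tau)$. Because $v_1$ is bounded outside a neighborhood of the origin, $V(s)$ is finite for every $s>0$, and $V$ is non-increasing in $s$.

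For sufficiency, suppose $B < \infty$, fix a non-negative non-decreasing $g$, and set $A := \sup_{\tau>0} v_1(\tau) g(\tau)$. The pivotal pointwise bound comes from the monotonicity of $g$: for every $s>0$ and every $\tau > s$,
\[
v_1(\tau) g(s) \le v_1(\tau) g(\tau) \le A,
\]
so taking the supremum over $\tau \in (s,\infty)$ gives $g(s) \le A/V(s)$. Multiplying by $w(s)$, integrating from $t$ to $\infty$, and then multiplying by $v_2(t)$ and taking $\sup_{t>0}$, I immediately obtain $\sup_{t>0} v_2(t) H^{\ast}_{w} g(t) \le B A$, which is \eqref{vav01} with $C = B$.

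For necessity and the sharpness of the constant, I would test on $g^{\ast}(s) := 1/V(s^{-})$, where $V(s^{-}) := \lim_{s'\uparrow s} V(s')$. Monotonicity of $V$ makes $V(\cdot^{-})$ non-increasing, so $g^{\ast}$ is non-decreasing and therefore admissible. The crucial inequality is $v_1(s) \le V(s^{-})$: for any $s' < s$ one has $V(s') = \sup_{s'<\tau<\infty} v_1(\tau) \ge v_1(s)$, and letting $s' \uparrow s$ gives the claim, so that $\sup_s v_1(s) g^{\ast}(s) \le 1$. Since $V$ is non-increasing it agrees with $V(\cdot^{-})$ except on an at most countable set, hence $\int_t^{\infty} w(s)/V(s^{-})\,ds = \int_t^{\infty} w(s)/V(s)\,ds$ in the standard weight setting. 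Substituting $g^{\ast}$ into \eqref{vav01} at an arbitrary $t = t_0>0$ and then taking $\sup_{t_0>0}$ produces $B \le C$; combined with the sufficiency estimate, this identifies $C = B$ as the optimal constant.

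The main obstacle I expect is the delicate bookkeeping around the left limit $V(s^{-})$: the naive choice $g(s) = 1/V(s)$ need not satisfy $\sup_s v_1(s) g(s) \le 1$ because the supremum defining $V(s)$ excludes $\tau = s$ itself. Passing to $V(s^{-})$ restores admissibility without altering the relevant integral against $w$. Should $w$ charge a discontinuity point of $V$, one instead approximates $g^{\ast}$ from below by superpositions of the form $\sum_k \chi_{[t_k,\infty)}(s)/V(t_k^{-})$ and passes to the limit by monotone convergence; the assumption that $v_1$ is bounded away from the origin is used precisely to ensure $V(s)<\infty$ for $s>0$ so the test functions are well defined and strictly positive.
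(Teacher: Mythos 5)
Your argument is correct, and it is essentially the standard proof of this result: the paper itself does not reproduce a proof but defers to \cite{GulJMS2013}, and what you have written is the argument given there --- the pointwise bound $g(s)\le A/\sup_{s<\tau<\infty}v_1(\tau)$ from monotonicity of $g$ for sufficiency, and the extremal test function $g^{\ast}(s)=1/V(s^{-})$ for necessity and sharpness. Both halves are sound: $v_1(s)\le V(s^{-})$ gives admissibility of $g^{\ast}$ with $\sup_s v_1(s)g^{\ast}(s)\le 1$, and since $V$ is monotone it differs from $V(\cdot^{-})$ only on a countable (hence Lebesgue-null) set, so the integral against $w(s)\,ds$ is unchanged; your worry about $w$ ``charging'' a discontinuity point is moot because the measure is absolutely continuous. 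The one inaccurate side remark is that boundedness of $v_1$ outside a neighborhood of the origin does \emph{not} force $V(s)<\infty$ for all $s>0$ (the supremum over $(s,\infty)$ still sees the region near the origin where $v_1$ may be unbounded); however, this is harmless under the paper's stated conventions $1/\infty=0$ and $0\cdot\infty=0$, since wherever $V(s)=\infty$ one has $g(s)=0$ for any admissible $g$ with finite right-hand side, and the corresponding portion of the integral in \eqref{vav02} vanishes.
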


\subsection{John-Nirenberg inequality}

When we deal with commutators generated by BMO functions,
we need the following fundamental estimates.
\begin{lem}{\rm(The John--Nirenberg inequality \cite{JohnNirenberg})}\label{rem2.4.}
Let $b \in {\rm BMO}(\Rn)$.
\begin{enumerate}
\item[$(1)$]
There exist constants $C_1$, $C_2>0$
independent of $b$, such that
$$
\left| \left\{ x \in B \, : \, |b(x)-b_{B}|>\beta \right\}\right|
\le C_1 |B| e^{-C_2 \beta/\| b \|_{\ast}}, ~~~ \forall B \subset \Rn
$$
for all $\beta>0$.
\item[$(2)$]
The following norm equivalence holds:
\begin{equation} \label{lem2.4.xx}
\|b\|_\ast \thickapprox \sup_{x\in\Rn, r>0}\left(\frac{1}{|B(x,r)|}
\int_{B(x,r)}|b(y)-b_{B(x,r)}|^p dy\right)^{\frac{1}{p}}
\end{equation}
for $1<p<\i$.
\item[$(3)$]
There exists a constant $C>0$ such that
\begin{equation} \label{propBMO}
\left|b_{B(x,r)}-b_{B(x,t)}\right| \le C
\|b\|_\ast \ln \frac{t}{r} \;\;\; \mbox{for} \;\;\; 0<2r<t,
\end{equation}
where $C$ is independent of $b$, $x$, $r$ and $t$.
\end{enumerate}
\end{lem}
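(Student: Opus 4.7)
The plan is to establish the three items in the order (1), (3), (2), since (2) follows easily from (1) and (3) is an independent elementary consequence of the definition of $\|b\|_\ast$.

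For (1), the approach is the classical Calderón–Zygmund stopping–time argument. Normalize $\|b\|_\ast = 1$ and fix a ball $B = B_0$. Choose a threshold $\alpha > 1$ (to be fixed at the end, e.g., $\alpha = 2^{n+1}$). Apply the Calderón–Zygmund decomposition to $|b - b_{B_0}|$ on $B_0$ at level $\alpha$, producing a pairwise disjoint family of subballs (or dyadic subcubes) $\{B_{1,j}\}$ such that $\alpha < \frac{1}{|B_{1,j}|}\int_{B_{1,j}} |b - b_{B_0}|\,dy \le 2^n \alpha$, while $|b(y) - b_{B_0}| \le \alpha$ for a.e.\ $y \in B_0 \setminus \bigcup_j B_{1,j}$. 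The upper bound on the average yields $|b_{B_{1,j}} - b_{B_0}| \le 2^n\alpha$, and summing the defining inequalities gives $\sum_j |B_{1,j}| \le \alpha^{-1}|B_0|$. Iterating this procedure inside each $B_{1,j}$ produces a nested family $\{B_{k,j}\}$ with $\sum_j |B_{k,j}| \le \alpha^{-k}|B_0|$ and $|b_{B_{k,j}} - b_{B_0}| \le k\cdot 2^n\alpha$ a.e. Outside $\bigcup_j B_{k,j}$ one has $|b - b_{B_0}| \le (k+1)2^n\alpha$. Given $\beta > 2^n\alpha$, choose $k = \lfloor \beta / (2^n\alpha) \rfloor$ to conclude
\[
|\{x \in B_0 : |b(x) - b_{B_0}| > \beta\}| \le \alpha^{-k}|B_0| \le C_1 e^{-C_2 \beta}|B_0|,
\]
with $C_2 = \log\alpha / (2^n\alpha)$. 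The range $0 < \beta \le 2^n\alpha$ is trivial.

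For (3), fix $x$ and $0 < 2r < t$, set $N \in \mathbb{N}$ with $2^{N-1}r \le t < 2^N r$, and telescope:
\[
b_{B(x,r)} - b_{B(x,t)} = \sum_{j=0}^{N-1}\bigl(b_{B(x,2^j r)} - b_{B(x,2^{j+1}r)}\bigr) + \bigl(b_{B(x,2^N r)} - b_{B(x,t)}\bigr).
\]
Each term is controlled by $\|b\|_\ast$: since $B(x,2^j r) \subset B(x,2^{j+1}r)$ with comparable measures,
\[
|b_{B(x,2^j r)} - b_{B(x,2^{j+1}r)}| \le \frac{1}{|B(x,2^j r)|}\int_{B(x,2^{j+1}r)}|b - b_{B(x,2^{j+1}r)}|\,dy \le 2^n\|b\|_\ast,
\]
and similarly for the remainder. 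Summing and using $N \le 1 + \log_2(t/r)$ gives $|b_{B(x,r)} - b_{B(x,t)}| \lesssim \|b\|_\ast \ln(t/r)$.

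For (2), the $\gtrsim$ direction is immediate from Hölder's inequality applied to the definition of $\|b\|_\ast$. The $\lesssim$ direction is the layer-cake formula combined with (1):
\[
\frac{1}{|B|}\int_B |b - b_B|^p\,dy = \frac{p}{|B|}\int_0^\infty \beta^{p-1}|\{x \in B : |b(x)-b_B| > \beta\}|\,d\beta \le C_1 p \int_0^\infty \beta^{p-1} e^{-C_2 \beta / \|b\|_\ast}\,d\beta,
\]
and the right-hand side equals $C_1 p\, \Gamma(p) C_2^{-p} \|b\|_\ast^p$, so taking $p$-th roots gives the claim.

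The only real obstacle is item (1); the geometric stopping argument must be executed carefully on balls (rather than cubes) to keep the constants dimension-free, but the reasoning is entirely standard and I would simply invoke the classical proof in \cite{JohnNirenberg}. Items (2) and (3) are routine consequences, so the entire lemma is quoted essentially verbatim as a tool for the commutator estimate to be carried out in Section~\ref{s4}.
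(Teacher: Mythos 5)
The paper does not prove this lemma at all: it is quoted verbatim as the classical John--Nirenberg inequality with a citation to \cite{JohnNirenberg}, and is used later only as a black box (specifically \eqref{lem2.4.xx} and \eqref{propBMO} in the commutator estimates of Section \ref{s4}). Your proposal supplies the standard textbook proof, and it is essentially correct: the Calder\'on--Zygmund stopping-time iteration for (1), dyadic telescoping for (3), and the layer-cake formula fed by (1) for (2) are exactly the classical arguments. Two small points deserve attention. First, in (1) your choice $k=\lfloor \beta/(2^n\alpha)\rfloor$ is off by one: since the bound off the $k$-th generation is $|b-b_{B_0}|\le (k+1)2^n\alpha$, you need $(k+1)2^n\alpha\le\beta$ to conclude $\{|b-b_{B_0}|>\beta\}\subset\bigcup_j B_{k,j}$, so take $k=\lfloor \beta/(2^n\alpha)\rfloor-1$; this only changes $C_1$. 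Second, as you note, the stopping-time construction really lives on dyadic cubes, not balls; the clean way to phrase it is to prove the distributional inequality for cubes and transfer to balls via the equivalence of the cube-based and ball-based BMO seminorms (inflating $C_1$ by a fixed dimensional factor), rather than attempting a ball decomposition directly. With those cosmetic repairs the argument is complete, and since the paper itself treats the lemma as a known citation, nothing in your write-up conflicts with how the result is used downstream.
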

\section{Intrinsic square functions in $M^{\Phi,\varphi}_{w}(\Rn)$}
\label{s3}

The following lemma generalizes Guliyev's lemma
\cite{GulDoc, GulBook, GulJIA} for Orlicz spaces:
\begin{lem}\label{lem3.3.}
Let $\a\in(0,1]$ and $1<p_0\le p_1<\i$.
Let $\Phi$ be a Young function
which is lower type $p_0$ and upper type $p_1$.
Assume that the weight belongs to the class $w \in A_{p_0}$.
Then for the operator $G_{\a}$ the following inequality is valid:
\begin{equation}\label{CZdgs}
\|G_{\a} f\|_{L^{\Phi}_{w}(B)} \lesssim
\int_{2r}^{\i} \|f\|_{L^{\Phi}(B(x_0,t))}
\frac{\Phi^{-1}\big(w(B(x_0,t))^{-1}\big)}{\Phi^{-1}\big(w(B(x_0,r))^{-1}\big)}\frac{dt}{t}
\end{equation}
for all
$f\in L^{\Phi,\rm loc}_{w}(\Rn)$, $B=B(x_0,r)$, $x_0\in \Rn$ and $r>0$.
\end{lem}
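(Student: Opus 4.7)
The plan is to follow the Guliyev decomposition paradigm. Write $f=f_1+f_2$ with $f_1\equiv f\chi_{_{2B}}$ and $f_2\equiv f\chi_{_{\Rn\setminus 2B}}$, so that $\|G_{\a} f\|_{L^{\Phi}_{w}(B)} \le \|G_{\a} f_1\|_{L^{\Phi}_{w}(B)} + \|G_{\a} f_2\|_{L^{\Phi}_{w}(B)}$. The local term is handled by the weighted $L^{\Phi}_{w}$-boundedness of $G_{\a}$ (a consequence of $w\in A_{p_0}$ together with $\Phi\in\Delta_2\cap\nabla_2$ provided by Remark \ref{remlowup}, following the lines of Theorem \ref{LiNaYaZhprop2.11-1}), which gives $\|G_{\a}f_1\|_{L^{\Phi}_{w}(B)}\lesssim\|f\|_{L^{\Phi}_{w}(2B)}$. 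To rewrite this as a tail integral I would use doubling of $w$: for $t\in(2r,4r)$ one has $\Phi^{-1}(w(B(x_0,t))^{-1})\approx\Phi^{-1}(w(B(x_0,r))^{-1})$ and $\|f\|_{L^{\Phi}_{w}(2B)}\le\|f\|_{L^{\Phi}_{w}(B(x_0,t))}$; integrating against $dt/t$ over $(2r,4r)$ and extending to $(2r,\i)$ dominates $\|f\|_{L^{\Phi}_{w}(2B)}$ by the right-hand side of \eqref{CZdgs}.

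The decisive work is the pointwise estimate for the non-local part. Since each $\phi\in C_{\a}$ is supported in the unit ball with $\|\phi\|_\infty\le C$, one has
\[
|f_2*\phi_t(y)| \lesssim t^{-n}\int_{\{|y-z|<t\}\cap(\Rn\setminus 2B)}|f(z)|\,dz,
\]
and the same majorant controls $A_{\a}f_2(t,y)$. Plugging this into the definition of $G_{\a}f_2(x)$ and applying Minkowski's integral inequality (pulling the $z$-integral outside the $L^2(\Gamma(x),dy\,dt/t^{n+1})$-norm), the residual inner double integral over $\{(y,t):|x-y|<t,\ |y-z|<t\}$ of $t^{-2n}\,dy\,dt/t^{n+1}$ collapses: the region forces $t\gtrsim|x-z|$, the $y$-slice has volume $\lesssim t^n$, so the remaining $t$-integral evaluates to $\lesssim|x-z|^{-2n}$, whose square root is the Riesz-type kernel $|x-z|^{-n}$. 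Because $x\in B$ and $z\notin 2B$ give $|x-z|\approx|x_0-z|$, I obtain
\[
G_{\a}f_2(x) \lesssim \int_{\Rn\setminus 2B}\frac{|f(z)|}{|x_0-z|^{n}}\,dz.
\]

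Rewriting $|x_0-z|^{-n}=n\int_{|x_0-z|}^{\i}t^{-n-1}\,dt$ and applying Fubini then the H\"older-type bound in Lemma \ref{lemHold} converts this into
\[
G_{\a}f_2(x) \lesssim \int_{2r}^{\i}\|f\|_{L^{\Phi}_{w}(B(x_0,t))}\,\Phi^{-1}\bigl(w(B(x_0,t))^{-1}\bigr)\frac{dt}{t}.
\]
The majorant is constant in $x\in B$, so its $L^{\Phi}_{w}(B)$-norm equals that bound multiplied by $\|\chi_{_B}\|_{L^{\Phi}_{w}}=1/\Phi^{-1}(w(B)^{-1})$ by Corollary \ref{lem4.0}, which matches exactly the right-hand side of \eqref{CZdgs}. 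Combining the local and non-local contributions yields the lemma. The main obstacle is the geometric Minkowski step: one must carefully organize the constraints defining $\Gamma(x)$ together with $|y-z|<t$ so that the Riesz kernel $|x-z|^{-n}$ emerges cleanly, after which the passage from $|x-z|$ to $|x_0-z|$ and the subsequent layer-cake manipulation are routine.
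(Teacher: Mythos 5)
Your proposal is correct and follows essentially the same route as the paper: the $f=f_1+f_2$ splitting, the $L^{\Phi}_{w}$-boundedness of $G_{\a}$ for the local part, the Minkowski/cone-collapse argument producing the kernel $|x_0-z|^{-n}$, the layer-cake identity with Lemma \ref{lemHold}, and the factor $\|\chi_{_B}\|_{L^{\Phi}_{w}}=1/\Phi^{-1}(w(B)^{-1})$. The only (harmless) deviation is how $\|f\|_{L^{\Phi}_{w}(2B)}$ is absorbed into the tail integral: you average over $t\in(2r,4r)$ using doubling of $w\in A_{p_0}$ and $\Phi\in\Delta_2\cap\nabla_2$, whereas the paper uses the pointwise bound $\Phi^{-1}\big(w(B(x_0,r))^{-1}\big)\lesssim r^n\int_{2r}^{\i}\Phi^{-1}\big(w(B(x_0,t))^{-1}\big)t^{-n-1}\,dt\cdot t^n|_{\text{rearranged}}$; both are valid under the stated hypotheses.
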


\begin{proof}
With the notation $2B=B(x_0,2r)$, we decompose $f$ as
$$
f=f_1+f_2, \ \quad
f_1(y)\equiv f(y)\chi _{2B}(y),\quad
f_2(y)\equiv f(y)\chi_{\dual {(2B)}}(y).
$$
We have
$$
\|G_{\a}f\|_{L^{\Phi}_{w}(B)}
\le
\|G_{\a}f_1\|_{L^{\Phi}_{w}(B)}
+
\|G_{\a}f_2\|_{L^{\Phi}_{w}(B)}
$$
by the triangle inequality.
Since $f_1\in L^{\Phi}_{w}(\Rn)$,
it follows from Theorem \ref{LiNaYaZhprop2.11-1} that
\begin{equation}\label{eq:140507-7}
\|G_{\a}f_1\|_{L^{\Phi}_{w}(B)}
\leq
\|G_{\a}f_1\|_{L^{\Phi}_{w}(\Rn)}
\lesssim
\|f_1\|_{L^{\Phi}_{w}(\Rn)}
=
\|f\|_{L^{\Phi}_{w}(2B)}.
\end{equation}
So, we can control $f_1$.

Now let us estimate $\|G_{\a}f_2\|_{L^{\Phi}_{w}(B)}$.
Let $x \in B=B(x_0,r)$ and write out $G_{\a}f_2(x)$ in full:
\begin{equation}\label{eq:140507-3}
G_{\a}(f)(x)\equiv
\left(\iint_{\Gamma(x)}
\left(\sup_{\phi\in C_{\a}}|f_2*\phi_{t}(y)|\right)^{2}\frac{dydt}{t^{n+1}}\right)^{\frac{1}{2}}.
\end{equation}
Let $(y,t) \in \Gamma(x)$.
We next write the convolution
$f_{2}*\displaystyle \phi_{t}(y)$ out in full:
\begin{equation}\label{eq:140507-2}
|f_{2}*\displaystyle \phi_{t}(y)|
=
\left|t^{-n}\int_{|y-z|\leq t}\phi\left(\frac{y-z}{t}\right)f_{2}(z)dz\right|
\lesssim
\frac{1}{t^n}\int_{|y-z|\leq t}|f_{2}(z) |dz.
\end{equation}
Recall that the suppost of $f$ is contained in $\dual {(2B)}$.
Keeping this in mind,
let $z \in B(y,t) \cap \dual{(2B)}$.
Since $(y,t)\in\Gamma(x)$,
we have
\begin{equation}\label{eq:140507-1}
|z-x|\leq|z-y|+|y-x|\leq 2t.
\end{equation}
Another geometric observation shows
$$r=2r-r\leq|z-x_{0}|-|x_{0}-x|\leq|x-z|.$$
Thus,
we obtain
\begin{equation}\label{eq:140507-5}
2t \ge r
\end{equation}
from (\ref{eq:140507-1}).
So,
putting together (\ref{eq:140507-3})--(\ref{eq:140507-5}),
we obtain
\begin{eqnarray*}
G_{\a}f_{2}(x) &\lesssim&
\left(\int \int_{\Gamma(x)}
\left|t^{-n}\int_{|y-z|\leq t}|f_{2}(z)|dz\right|^{2}\frac{dydt}{t^{n+1}}
\right)^{\frac{1}{2}}
\\
{}&\leq&\left(\int_{t>r/2}\int_{|x-y|<t}
\left(\int_{|z-x|\leq 2t}|f(z)|dz
\right)^{2}\frac{dydt}{t^{3n+1}}\right)^{\frac{1}{2}}
\\
{}&\lesssim&
\left(\int_{t>r/2}\left(\int_{|z-x|\leq 2t}|f(z)|dz\right)^{2}
\frac{dt}{t^{2n+1}}\right)^{\frac{1}{2}}.
\end{eqnarray*}
We make another geometric observation:
\begin{equation}\label{eq:140507-4}
|z-x|\displaystyle \geq|z-x_{0}|-|x_{0}-x|\geq\frac{1}{2}|z-x_{0}|.
\end{equation}
By Minkowski's inequality, we obtain
\[
G_{\a}f_{2}(x)
\lesssim
\int_{\mathbb{R}^{n}}\left(\int_{t>\frac{|z-x|}{2}}\frac{dt}{t^{2n+1}}
\right)^{\frac{1}{2}}|f(z)|dz.
\]
Thanks to (\ref{eq:140507-4}),
we have
\begin{eqnarray*}
G_{\a}f_{2}(x)
{}&\lesssim&\int_{|z-x_{0}|>2r}\frac{|f(z)|}{|z-x|^{n}}dz\\
&\lesssim&
\int_{|z-x_{0}|>2r}\frac{|f(z)|}{|z-x_{0}|^{n}}dz \nonumber
\\
{}&=&
\int_{|z-x_{0}|>2r}|f(z)|
\left(\int_{|z-x_{0}|}^{+\i}\frac{dt}{t^{n+1}}\right)dz\\
&=&\int_{2r}^{\i}\left(\int_{B(x_{0},t)}|f(z)|dz\right)
\frac{dt}{t^{n+1}}.
\end{eqnarray*}
If we invoke Lemma \ref{lemHold},
then we obtain
\begin{eqnarray}\label{estGf2}
G_{\a}f_{2}(x) &\lesssim&
\int_{2r}^{\i} \|f\|_{L^{\Phi}_{w}(B(x_0,t))}
\Phi^{-1}\big(w(B(x_0,t))^{-1}\big) \frac{dt}{t}.
\end{eqnarray}
Moreover,
\begin{equation} \label{ves2}
\|G_{\a}f_2\|_{L^{\Phi}_{w}(B)}\lesssim
\int_{2r}^{\i}\|f\|_{L^{\Phi}_{w}(B(x_0,t))}
\frac{\Phi^{-1}\big(w(B(x_0,t))^{-1}\big)}
{\Phi^{-1}\big(w(B(x_0,r))^{-1}\big)}
\frac{dt}{t}.
\end{equation}
Thus, it follows from (\ref{eq:140507-7}) and (\ref{estGf2}) that
\begin{equation}\label{eq:140507-8}
\|G_{\a}f\|_{L^{\Phi}_{w}(B)}
\lesssim
\|f\|_{L^{\Phi}_{w}(2B)}+
\int_{2r}^{\i}\|f\|_{L^{\Phi}_w (B(x_0,t))}
\frac{\Phi^{-1}\big(w(B(x_0,t))^{-1}\big)}
{\Phi^{-1}\big(w(B(x_0,r))^{-1}\big)}
\frac{dt}{t}.
\end{equation}
On the other hand, by \eqref{2.3} we get
\begin{align*} 
\Phi^{-1}\big(w(B(x_0,r))^{-1}\big) &\thickapprox
\Phi^{-1}\big(w(B(x_0,r))^{-1}\big) r^n \int_{2r}^{\i}\frac{dt}{t^{n+1}} \notag
\\
& \lesssim
\int_{2r}^{\i} \Phi^{-1}\big(w(B(x_0,t))^{-1}\big) \frac{dt}{t}
\end{align*}
and hence
\begin{equation} \label{ves2fd}
\|f\|_{L^{\Phi}_{w}(2B)}\lesssim
\int_{2r}^{\i} \|f\|_{L^{\Phi}_{w}(B(x_0,t))}
\frac{\Phi^{-1}\big(w(B(x_0,t))^{-1}\big)}
{\Phi^{-1}\big(w(B(x_0,r))^{-1}\big)}
 \frac{dt}{t}.
\end{equation}
Thus, it follows from (\ref{eq:140507-8}) and (\ref{ves2fd})
that
\begin{equation*}
\|G_{\a}f\|_{L^{\Phi}_{w}(B)}
\lesssim
\int_{2r}^{\i}
\|f\|_{L^{\Phi}_{w} (B(x_0,t))}
\frac{\Phi^{-1}\big(w(B(x_0,t))^{-1}\big)}
{\Phi^{-1}\big(w(B(x_0,r))^{-1}\big)} \frac{dt}{t}.
\end{equation*}
So, we are done.
\end{proof}

With this preparation,
we can prove Theorem \ref{thm4.4.}

\begin{proof}
Fix $x \in {\mathbb R}^n$.
Write
\begin{gather*}
v_1(r)\equiv \varphi_1(x,r)^{-1}, \quad
v_2(r)\equiv\frac{1}{\varphi_2(x,r)\Phi^{-1}(w(B(x_0,r))^{-1})}, \quad\\
g(r)\equiv\|f\|_{L^{\Phi}_{w}(B(x_0,r))}, \quad
\omega(r)\equiv\frac{\Phi^{-1}(w(B(x_0,r))^{-1})}{r}.
\end{gather*}
We omit a routine procude of truncation
to justify the application of Theorem \ref{thm3.2.}.
By Lemma \ref{lem3.3.} and Theorem \ref{thm3.2.},
we have
\begin{align*}
\lefteqn{
\|G_{\a} f\|_{M^{\Phi,\varphi_2}_{w}(\Rn)}
}\\
& \lesssim \sup_{x\in\Rn,\,r>0}
\frac{1}{\varphi_2(x,r)} \int_{r}^{\i}\|f\|_{L^{\Phi}_{w}(B(x_0,t))}
\Phi^{-1}\big(w(B(x_0,t))^{-1}\big) \frac{dt}{t}
\\
& \lesssim \sup_{x\in\Rn,\,r>0}
\frac{1}{\varphi_1(x,r)} \, \Phi^{-1}\big(w(B(x_0,r))^{-1}\big)
\, \|f\|_{L^{\Phi}_{w}(B(x_0,r))}
\\
& = \|f\|_{M^{\Phi,\varphi_1}}.
\end{align*}
So we are done.
\end{proof}

The following lemma is an easy consequence of
the monotonicity of the norm $\|\cdot\|_{L^{\Phi}_{w}}$
and Wilson's estimate;
$$
G_{\a,\beta}(f)(x)\leq\beta^{\frac{3n}{2}+\a}G_{\a}(f)(x)
\quad (x \in \R^n),
$$
which was proved in \cite{Wilson1}.
\begin{lem}\label{wulem2.3}
For $j\in \mathrm{Z}^{+}$, denote
$$
G_{\a,2^{j}}(f)(x)
\equiv
\left(\int_{0}^{\i}\int_{|x-y|\leq 2^{j}t}
(A_{\a}f(t,y))^{2}\frac{dydt}{t^{n+1}}\right)^{\frac{1}{2}}
$$
Let $\Phi$ be a Young function and $0<\a\leq 1$.
Then we have
$$
\Vert G_{\a,2^{j}}(f)\Vert_{L^{\Phi}_{w}}
\lesssim 2^{j(\frac{3n}{2}+\a)}
\Vert G_{\a}(f)\Vert_{L^{\Phi}_{w}}
$$
for all $f \in L^{\Phi}_w({\mathbb R}^n)$.
\end{lem}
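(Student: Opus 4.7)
The plan is to combine two ingredients that are already explicitly recorded earlier in the paper: Wilson's pointwise estimate and the monotonicity of the weighted Orlicz (Luxemburg) norm. There is essentially no analytic work to do; the content is just a change of the aperture parameter.

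First I would observe that $G_{\a,2^{j}}(f)$ coincides with $G_{\a,\beta}(f)$ as defined in the introduction when one takes $\beta=2^{j}$, because in both cases the integration region $\{(y,t): |x-y|\le \beta t\}$ agrees with $\Gamma_{\beta}(x)$ up to a null set in $t$. Hence Wilson's pointwise inequality
\[
G_{\a,\beta}(f)(x)\le \beta^{\frac{3n}{2}+\a}G_{\a}(f)(x)
\]
recalled in the paper from \cite{Wilson1} gives at once
\[
G_{\a,2^{j}}(f)(x)\le 2^{j(\frac{3n}{2}+\a)}G_{\a}(f)(x)\qquad (x\in \R^{n}).
\]

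Second, I would invoke the monotonicity of $\|\cdot\|_{L^{\Phi}_{w}}$: if $0\le g\le h$ pointwise, then $\|g\|_{L^{\Phi}_{w}}\le\|h\|_{L^{\Phi}_{w}}$. This is immediate from the Luxemburg definition, because $\Phi$ is non-decreasing, so for every admissible $\lambda$ for $h$ the corresponding integral for $g$ is no larger. Combining this with the homogeneity $\|c\,h\|_{L^{\Phi}_{w}}=c\|h\|_{L^{\Phi}_{w}}$ for $c\ge 0$ and applying it to the pointwise inequality above yields
\[
\|G_{\a,2^{j}}(f)\|_{L^{\Phi}_{w}}\lesssim 2^{j(\frac{3n}{2}+\a)}\|G_{\a}(f)\|_{L^{\Phi}_{w}},
\]
which is the claim.

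No step here is a genuine obstacle; the only thing to be careful about is that both ingredients are available without any $\Delta_{2}$ or $\nabla_{2}$ assumption on $\Phi$, and indeed both Wilson's estimate and the monotonicity/homogeneity of the Luxemburg norm hold for arbitrary Young functions $\Phi$ and arbitrary non-negative weights $w$, matching the generality of the statement.
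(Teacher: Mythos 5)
Your proof is correct and is exactly the argument the paper intends: the authors themselves introduce the lemma as ``an easy consequence of the monotonicity of the norm $\|\cdot\|_{L^{\Phi}_{w}}$ and Wilson's estimate'' and give no further details. Your additional remarks (the identification of $G_{\a,2^j}$ with $G_{\a,\beta}$ for $\beta=2^j$ up to a null set, and the fact that no $\Delta_2$ or $\nabla_2$ hypothesis is needed) are accurate and only make the argument more complete than the paper's.
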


Now we can prove Thoerem \ref{thm4.4.x}.
We write $g_{\lambda,\a}^{*}(f)(x)$ out in full:
\begin{eqnarray*}
[g_{\lambda,\a}^{*}(f)(x)]^{2} =
\displaystyle \iint_{\Gamma(x)}+
\iint_{\,^{^{\complement}}\!\Gamma(x)}
\left(\frac{t}{t+|x-y|}\right)^{n\lambda}
(A_{\a}f(t,y))^{2}\frac{dydt}{t^{n+1}}
:=I+II.
\end{eqnarray*}
As for $I$,
a crude estimate suffices;
\begin{equation}\label{eq:140420-1}
I\leq \displaystyle \iint_{\Gamma(x)}
(A_{\a}f(t,y))^{2}\frac{dydt}{t^{n+1}}\leq(G_{\a}f(x))^{2}.
\end{equation}
Thus, the heart of the matters is to control $II$.
We decompose the ambient space ${\mathbb R}^n$:
\begin{eqnarray}
II &\leq&
\sum_{j=1}^{\i}\int_{0}^{\i}\int_{2^{j-1}t\leq|x-y|\leq 2^{j}t}
\left(\frac{t}{t+|x-y|}\right)^{n\lambda}(A_{\a}f(t,y))^{2}
\frac{dydt}{t^{n+1}}
\nonumber\\
{}&\lesssim &
\sum_{j=1}^{\i}\int_{0}^{\i}\int_{2^{j-1}t\leq|x-y|\leq 2^{j}t}2^{-jn\lambda}
(A_{\a}f(t,y))^{2}\frac{dydt}{t^{n+1}}
\nonumber\\
{}&\lesssim &
\sum_{j=1}^{\i}\iint_{\Gamma_{2^j}(x)}
\frac{(A_{\a}f(t,y))^{2}}{2^{jn\lambda}}\frac{dydt}{t^{n+1}}
:=\sum_{j=1}^{\i}\frac{(G_{\a,2^{j}}(f)(x))^{2}}{2^{jn\lambda}}.
\label{eq:140420-2}
\end{eqnarray}
Thus, putting together (\ref{eq:140420-1}) and (\ref{eq:140420-2}),
we obtain
\begin{equation}\label{wueq7}
\Vert g_{\lambda,\a}^{*}(f) \Vert_{M^{\Phi,\varphi_2}_{w}}
\lesssim
\Vert G_{\a}f \displaystyle \Vert_{M^{\Phi,\varphi_2}_{w}}
+\sum_{j=1}^{\i} 2^{-\frac{jn\lambda}{2}}
\Vert G_{\a,2^{j}}(f) \Vert_{M^{\Phi,\varphi_2}_{w}}.
\end{equation}
By Theorem \ref{thm4.4.}, we have
\begin{equation}\label{wueq8}
\Vert G_{\a}f\Vert_{M^{\Phi,\varphi_2}_{w}(\Rn)}\lesssim\Vert f\Vert_{M^{\Phi,\varphi_1}_{w}(\Rn)}.
\end{equation}
In the sequel,
we will estimate $\Vert G_{\a,2^{j}}(f) \Vert_{M^{\Phi,\varphi_2}_{w}}$.
We divide $\Vert G_{\a,2^{j}}(f)\Vert_{L^{\Phi}_{w}(B)}$ into two parts:
\begin{equation}\label{wueq9}
\Vert G_{\a,2^{j}}(f)\Vert_{L^{\Phi}_{w}(B)}\leq
\Vert G_{\a,2^{j}}(f_{1})\Vert_{L^{\Phi}_{w}(B)}
+
\Vert G_{\a,2^{j}}(f_{2})\Vert_{L^{\Phi}_{w}(B)},
\end{equation}
where $f_{1}(y)\equiv f(y)\chi_{2B}(y)$ and $f_{2}(y) \equiv f(y)-f_{1}(y)$.
For $\Vert G_{\a,2^{j}}(f_{1})\Vert_{L^{\Phi}_{w}(B)}$,
by Lemma \ref{wulem2.3} and \eqref{ves2fd},
we have (see also, \cite[p. 47, (5.4)]{GulEMJ2012})
\begin{align}
\nonumber
\Vert G_{\a,2^{j}}(f_{1})\Vert_{L^{\Phi}_{w}(B)}
&\lesssim
2^{j(\frac{3n}{2}+\a)}
\Vert G_{\a}(f_{1})\Vert_{L^{\Phi}_{w}(\mathbb{R}^{n})}\\
&\lesssim
2^{j(\frac{3n}{2}+\a)}\Vert f\Vert_{L^{\Phi}_{w}(2B)}\\
\label{wueq10}
&\lesssim
2^{j(\frac{3n}{2}+\a)}
\int_{2r}^{\i} \|f\|_{L^{\Phi}_{w}(B(x_0,t))}
\frac{\Phi^{-1}\big(w(B(x_0,t))^{-1}\big)}
{\Phi^{-1}\big(w(B(x_0,r))^{-1}\big)}
 \frac{dt}{t}.
\end{align}
For $\Vert G_{\a,2^{j}}(f_{2})\Vert_{L^{\Phi}_{w}(B)}$,
we first write the quantity out in full:
\begin{eqnarray*}
G_{\a,2^{j}}(f_{2})(x)
&=&
\left(\displaystyle \iint_{\Gamma_{2^j}(x)}
(A_{\a}f(t,y))^{2}\frac{dydt}{t^{n+1}}\right)^{\frac{1}{2}}
\\
{}&=& \left(\iint_{\Gamma_{2^j}(x)}
\left(\sup_{\phi\in C_{\a}}|f*\phi_{t}(y)|\right)^{2}
\frac{dydt}{t^{n+1}}\right)^{\frac{1}{2}}.
\end{eqnarray*}
A geometric observation shows that
\begin{eqnarray*}
G_{\a,2^{j}}(f_{2})(x)
{}&\lesssim& \left(\iint_{\Gamma_{2^j}(x)}
\left(\int_{|z-y|\leq t}|f_{2}(z)|dz\right)^{2}\frac{dydt}{t^{3n+1}}
\right)^{\frac{1}{2}}.
\end{eqnarray*}
Since $|z-x|\leq|z-y|+|y-x|\leq 2^{j+1}t$, we get
\begin{eqnarray*}
G_{\a,2^{j}}(f_{2})(x) &\lesssim&
\left(\displaystyle \iint_{\Gamma_{2^j}(x)}
\left(\int_{|z-x|\leq 2^{j+1}t}|f_{2}(z)|dz\right)^{2}
\frac{dydt}{t^{3n+1}}\right)^{\frac{1}{2}}
\\
{}&\lesssim& \left(\int_{0}^{\i}
\left(\int_{|z-x|\leq 2^{j+1}t}|f_{2}(z)|dz\right)^{2}
\frac{2^{jn}dt}{t^{2n+1}}\right)^{\frac{1}{2}}
\\
{}&\lesssim& 2^{\frac{jn}{2}}\int_{\mathbb{R}^{n}}
\left(\int_{\frac{|z-x|}{2^{j+1}}}^{\infty}
\frac{|f_{2}(z)|^{2}}{t^{2n+1}}dt\right)^{\frac{1}{2}}dz
\lesssim 2^{\frac{3jn}{2}}\int_{\dual{B(x_0,2r)}}\frac{|f(z)|dz}{|z-x|^{n}}.
\end{eqnarray*}
A geometric observation shows
$$
|z-x|
\geq|z-x_{0}|-|x_{0}-x|\geq|z-x_{0}|-\frac{1}{2}|z-x_{0}|
=\frac{1}{2}|z-x_{0}|.
$$
Thus,
we have
\[
G_{\a,2^{j}}(f_{2})(x)\lesssim
2^{\frac{3jn}{2}}\int_{|z-x_{0}|>2r}\frac{|f(z)|}{|z-x_{0}|^{n}}dz.
\]
By Fubini's theorem and Lemma \ref{lemHold}, we obtain
\begin{eqnarray*}
G_{\a,2^{j}}(f_{2})(x)&\lesssim&
2^{\frac{3jn}{2}}\int_{|z-x_{0}|>2r}|f(z)|\left(
\int_{|z-x_{0}|}^{\i}\frac{dt}{t^{n+1}}\right)dz
\\
{}&\lesssim& 2^{\frac{3jn}{2}}\int_{2r}^{\i}\left(
\int_{|z-x_{0}|<t}|f(z)|\frac{dt}{t^{n+1}}\right)dz
\\
{}&\lesssim& 2^{\frac{3jn}{2}}\int_{2r}^{\i}
\|f\|_{L^{\Phi}_{w}(B(x_0,t))} \Phi^{-1}\big(w(B(x_0,t))^{-1}\big)
\frac{dt}{t}.
\end{eqnarray*}
So,
\begin{equation}\label{wueq11}
\Vert G_{\a,2^{j}}(f_{2})\Vert_{L^{\Phi}_{w}(B)}\lesssim
2^{\frac{3jn}{2}}
\int_{2r}^{\i} \|f\|_{L^{\Phi}_{w}(B(x_0,t))}
\frac{\Phi^{-1}\big(w(B(x_0,t))^{-1}\big)}
{\Phi^{-1}\big(w(B(x_0,r))^{-1}\big)}
 \frac{dt}{t}.
\end{equation}
Combining \eqref{wueq9}, \eqref{wueq10} and \eqref{wueq11}, we have
$$
\Vert G_{\a,2^{j}}(f)\Vert_{L^{\Phi}_{w}(B)}\lesssim
2^{j(\frac{3n}{2}+\a)}
\int_{2r}^{\i} \|f\|_{L^{\Phi}_{w}(B(x_0,t))}
\frac{\Phi^{-1}\big(w(B(x_0,t))^{-1}\big)}
{\Phi^{-1}\big(w(B(x_0,r))^{-1}\big)}
 \frac{dt}{t}.
$$
Consequently, we obtain
\begin{align*}
 \|G_{\a,2^{j}} f\|_{M^{\Phi,\varphi_2}_{w}(\Rn)}
 \lesssim 2^{j(\frac{3n}{2}+\a)}\sup\limits_{\substack{x_0\in\Rn\\ r>0}}
\int_{r}^{\i} \Phi^{-1}\big(w(B(x_0,t))^{-1}\big)
\frac{ \|f\|_{L^{\Phi}_{w}(B(x,t))}}{
\varphi_2(x_0,r)}\,\frac{dt}{t}. \notag
\end{align*}
Thus by Theorem \ref{thm3.2.} we have
\begin{align}\label{wueq12}
\|G_{\a,2^{j}} f\|_{M^{\Phi,\varphi_2}_{w}(\Rn)}
& \lesssim 2^{j(\frac{3n}{2}+\a)}\sup\limits_{\substack{x_0\in\Rn\\ r>0}}
\frac{\Phi^{-1}\big(w(B(x_0,r))^{-1}\big)}{\varphi_1(x_0,r)}
\|f\|_{L^{\Phi}_{w}(B(x,r))} \notag
\\
{}& = 2^{j(\frac{3n}{2}+\a)}\|f\|_{M^{\Phi,\varphi_1}_{w}(\Rn)}.
\end{align}
Since $\lambda >3+\displaystyle \frac{2\a}{n}$, by \eqref{wueq7}, \eqref{wueq8} and \eqref{wueq12}, we can conclude the proof of the theorem.

\section{Commutators of the intrinsic square functions in $M^{\Phi,\varphi}_{w}(\Rn)$}\label{s4}

We start with a characterization of the ${\rm BMO}$ norm.
\begin{lem}\label{Bmo-orlicz}
Let $0<p_0\le p_1<\i$.
Let $b\in {\rm BMO}(\Rn)$ and $\Phi$ be a Young function
which is lower type $p_0$ and upper type $p_1$.
Then
$$
\|b\|_\ast \thickapprox
\sup_{x\in\Rn, r>0}\Phi^{-1}\big(w(B(x,r))^{-1}\big)
\left\|b-b_{B(x,r)}\right\|_{L^{\Phi}_{w}(B(x,r))}.
$$
\end{lem}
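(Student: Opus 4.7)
The plan is to prove the two inequalities separately. For the direction $\|b\|_\ast \lesssim$ RHS, the strategy is to fix a ball $B = B(x,r)$ and apply Lemma \ref{lemHold} with $f = b - b_B$, which gives
\[
\frac{1}{|B|} \int_B |b(y) - b_B|\,dy \lesssim \Phi^{-1}\big(w(B)^{-1}\big)\,\|b - b_B\|_{L^\Phi_w(B)}.
\]
Taking the supremum over $B$ completes this half in one line.

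For the reverse inequality, I would argue via the John--Nirenberg inequality in its weighted form. Set $\mu := \Phi^{-1}\big(w(B)^{-1}\big)$ and $h := (b - b_B)/\|b\|_\ast$. By Lemma \ref{Kylowupp}, it suffices to exhibit positive constants $C_0,\widetilde{C}$ so that
\[
\int_B \Phi\!\left(\frac{|h(y)|\,\mu}{C_0}\right) w(y)\,dy \le \widetilde{C}
\]
holds uniformly in $B$. The starting point is the John--Nirenberg inequality in Lemma \ref{rem2.4.}(1), which gives exponential decay of $|\{y \in B : |h(y)| > \beta\}|$ in $\beta$. Combining this with the $A_\infty$ reverse-H\"older property (implicit in the standing assumption $w \in A_{p_0} \subset A_\infty$) upgrades this to the weighted tail bound
\[
w\big(\{y \in B : |h(y)| > \beta\}\big) \le C_3\, e^{-c_2 \beta}\, w(B).
\]

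With this weighted tail bound in hand, the layer-cake identity together with the change of variables $s = \Phi(t)$ converts the target integral into $\int_0^{\i} w(\{|h| > C_0 t/\mu\})\, \Phi'(t)\,dt$. An integration by parts---justified because $\Phi$ grows only polynomially thanks to its upper type $p_1$---then produces a bound of the form $\lesssim w(B) \int_0^{\i} e^{-u}\, \Phi\!\big(\mu u/(c_2 C_0)\big)\,du$. Splitting this integral at $u = c_2 C_0$ and invoking the lower type $p_0$ on the range $u \le c_2 C_0$ and the upper type $p_1$ on $u \ge c_2 C_0$, one extracts a factor $\Phi(\mu)$ multiplied by a quantity $\epsilon(C_0)$ built from incomplete Gamma integrals, with $\epsilon(C_0) \to 0$ as $C_0 \to \i$. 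Finally, \eqref{younginverse} yields $\Phi(\mu)\, w(B) \le 1$, so choosing $C_0$ large enough delivers the desired uniform modular bound.

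The main obstacle I anticipate is the careful bookkeeping of absolute constants through the weighted John--Nirenberg step and through the type-based splitting: the choice of $C_0$ must simultaneously absorb the $A_\infty$ transfer constant and make the resulting Gamma-type tails small enough. Every other step---layer cake, integration by parts, and the final invocation of Lemma \ref{Kylowupp}---reduces to routine verification using material already developed in Section \ref{s2}.
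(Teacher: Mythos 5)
Your proposal is correct, and the easy direction (Lemma \ref{lemHold} applied to $f=b-b_{B}$) is exactly what the paper does. For the substantive direction, however, you take a genuinely different route. The paper normalizes $\|b\|_*=1$ and bounds the modular directly: it uses the pointwise consequence of the type hypotheses, $\Phi(st)\lesssim (t^{p_0}+t^{p_1})\Phi(s)$, together with \eqref{younginverse} to reduce the modular to $\frac{1}{|B|}\int_B\bigl(|b-b_B|^{p_0}+|b-b_B|^{p_1}\bigr)$, which is $\lesssim 1$ by the $L^p$-form of John--Nirenberg, i.e.\ \eqref{lem2.4.xx}; then Lemma \ref{Kylowupp} finishes, just as in your argument. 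You instead start from the distributional form, Lemma \ref{rem2.4.}(1), transfer it to the measure $w\,dx$ via the $A_\infty$ comparison $w(E)/w(B)\lesssim(|E|/|B|)^{\delta}$, and run a layer-cake/integration-by-parts computation whose pieces are controlled by incomplete Gamma integrals after splitting according to the lower and upper types. Both arguments are sound (your integration by parts is legitimate since the type hypotheses force $\Phi\in\Delta_2\cap\nabla_2$, hence $\Phi\in\mathcal{Y}$ and polynomial growth), and the final appeal to Lemma \ref{Kylowupp} only needs a uniform modular bound, so the limit $\epsilon(C_0)\to0$ is more than is required. What your longer route buys is that it treats the weight honestly: the paper's displayed computation is carried out with $dy$ and $\Phi^{-1}(|B(x,r)|^{-1})$ rather than $w(y)\,dy$ and $\Phi^{-1}(w(B(x,r))^{-1})$, so to match the weighted statement one needs either your $A_\infty$ transfer or a weighted version of \eqref{lem2.4.xx}; in exchange, the paper's argument is two lines and avoids the distributional machinery. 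Note that both directions (yours and the paper's) implicitly use $w\in A_{p_0}\subset A_\infty$, which is not listed among the hypotheses of the lemma but is the standing assumption of the paper.
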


\begin{proof}
By H\"{o}lder's inequality, we have
$$\|b\|_\ast \lesssim
\sup_{x\in\Rn, r>0}\Phi^{-1}\big(w(B(x,r))^{-1}\big)
\left\|b-b_{B(x,r)}\right\|_{L^{\Phi}_{w}(B(x,r))}.
$$

Now we show that
$$
\sup_{x\in\Rn, r>0}\Phi^{-1}\big(w(B(x,r))^{-1}\big)
\left\|b-b_{B(x,r)}\right\|_{L^{\Phi}_{w}(B(x,r))} \lesssim \|b\|_\ast.
$$
Without loss of generality, we may assume that $\|b\|_\ast=1$; otherwise,
we replace $b$ by $b/\|b\|_\ast$.
By the fact that $\Phi$ is lower type $p_0$ and upper type $p_1$
and \eqref{younginverse} it follows that
\begin{eqnarray*}
&&\int_{B(x,r)}
\Phi\left(\frac{|b(y)-b_{B(x,r)}|\Phi^{-1}\big(|B(x,r)|^{-1}\big)}{\|b\|_\ast}
\right)dy\\
&&=
\int_{B(x,r)}\Phi\left(|b(y)-b_{B(x,r)}|\Phi^{-1}\big(|B(x,r)|^{-1}\big)\right)
dy
\\
&&\lesssim\frac{1}{|B(x,r)|}
\int_{B(x,r)}\left[|b(y)-b_{B(x,r)}|^{p_0}+|b(y)-b_{B(x,r)}|^{p_1}\right]dy
\lesssim 1.
\end{eqnarray*}
By Lemma \ref{Kylowupp} we get the desired result.
\end{proof}

\begin{rem}
Note that a counterpart to Lemma \ref{Bmo-orlicz}
for the variable exponent Lebesgue space $L^{p(\cdot)}$ case was
obtained
in \cite{IzukiSaw}.
\end{rem}

\begin{lem}\label{lem5.1.}
Let $\a\in(0,1]$, $1<p_0\le p_1<\i$ and $b\in {\rm BMO}(\Rn)$.
Let $\Phi$ be a Young function which is lower type $p_0$ and upper type $p_1$.
Then the inequality
\begin{align*}\label{eq5.1.}
& \|[b,G_{\a}]f\|_{L^{\Phi}_{w}(B(x_0,r))}
\\
& \lesssim \frac{\|b\|_{*}}{\Phi^{-1}\big(w(B(x_0,r))^{-1}\big)}
 \int_{2r}^{\i} \Big(1+\ln \frac{t}{r}\Big)\|f\|_{L^{\Phi}_{w}(B(x_0,t))}
\Phi^{-1}\big(w(B(x_0,t))^{-1}\big) \frac{dt}{t}
\end{align*}
holds
for any ball $B(x_0,r)$ and for any $f\in L^{\Phi,\rm loc}_{w}(\Rn)$.
\end{lem}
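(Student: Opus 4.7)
The plan is to adapt the strategy used in the proof of Lemma~\ref{lem3.3.} to the commutator setting. With $B=B(x_0,r)$, I decompose $f=f_1+f_2$, where $f_1\equiv f\chi_{2B}$ and $f_2\equiv f\chi_{\dual{(2B)}}$. For $f_1$ the $L^\Phi_w$-boundedness of $[b,G_\a]$ from Theorem~\ref{LiNaYaZhprop2.17-1} gives $\|[b,G_\a]f_1\|_{L^\Phi_w(B)}\lesssim \|b\|_\ast\|f\|_{L^\Phi_w(2B)}$, and the same manoeuvre as in \eqref{ves2fd}, together with $1+\ln(t/r)\ge 1$, dominates this by the right-hand side of the claimed inequality. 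All the remaining work is a pointwise bound on $[b,G_\a]f_2(x)$ for $x\in B$.

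The key decomposition is $b(y)-b(z)=(b(y)-b_B)-(b(z)-b_B)$ with $b_B\equiv b_{B(x_0,r)}$. Inserting this inside the supremum defining $A_{\a,b}f_2(t,y)$ yields $A_{\a,b}f_2(t,y)\le|b(y)-b_B|\,A_\a f_2(t,y)+A_\a((b-b_B)f_2)(t,y)$, so that $[b,G_\a]f_2(x)\lesssim J_1(x)+J_2(x)$, where $J_1(x)\equiv\bigl(\iint_{\Gamma(x)}|b(y)-b_B|^2(A_\a f_2(t,y))^2\frac{dy\,dt}{t^{n+1}}\bigr)^{1/2}$ and $J_2(x)\equiv G_\a((b-b_B)f_2)(x)$. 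For $J_2(x)$ I repeat the geometric chain of Lemma~\ref{lem3.3.} verbatim to obtain $J_2(x)\lesssim\int_{2r}^\infty t^{-n-1}\int_{B(x_0,t)}|b(z)-b_B||f(z)|\,dz\,dt$. Writing $|b(z)-b_B|\le|b(z)-b_{B(x_0,t)}|+|b_{B(x_0,t)}-b_B|$, using \eqref{propBMO} on the second summand, Lemma~\ref{lemHold} on $\int_{B(x_0,t)}|f|\,dz$, and a generalized H\"older estimate $\int_{B(x_0,t)}|b-b_{B(x_0,t)}||f|\,dz\lesssim\|b\|_\ast t^n\Phi^{-1}(w(B(x_0,t))^{-1})\|f\|_{L^\Phi_w(B(x_0,t))}$ (a consequence of Lemma~\ref{Bmo-orlicz} coupled with Theorem~\ref{HolderOr}) on the first summand, I obtain a pointwise bound on $J_2(x)$ of the desired integral form carrying the $(1+\ln(t/r))$ weight.

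The main obstacle is $J_1(x)$, since $|b(y)-b_B|^2$ sits inside the Lusin-cone integral. I plan to apply Minkowski's integral inequality to pull the $z$-integration outside:
\[
J_1(x)\le\int|f(z)|\chi_{\dual{(2B)}}(z)\bigl(\iint_{\Gamma(x),\,|y-z|\le t}|b(y)-b_B|^2 t^{-3n-1}\,dy\,dt\bigr)^{1/2}\,dz.
\]
For fixed $z$, the geometric observations of Lemma~\ref{lem3.3.} restrict the $t$-integration to $[c|z-x_0|,\infty)$ and place $y$ inside $B(x,t)\subset B(x_0,3t)$. Splitting $b-b_B=(b-b_{B(x_0,3t)})+(b_{B(x_0,3t)}-b_B)$ and applying \eqref{lem2.4.xx} with $p=2$ together with \eqref{propBMO} yields $\int_{B(x_0,3t)}|b(y)-b_B|^2\,dy\lesssim\|b\|_\ast^2(1+\ln(t/r))^2 t^n$. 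An elementary estimate of the remaining $t$-integral then delivers $J_1(x)\lesssim\|b\|_\ast\int_{|z-x_0|>2r}\frac{(1+\ln(|z-x_0|/r))|f(z)|}{|z-x_0|^n}\,dz$, and Fubini's theorem followed by Lemma~\ref{lemHold} converts this into the same $t$-integral as for $J_2(x)$. Since the resulting pointwise bound for $[b,G_\a]f_2$ is independent of $x\in B$, taking $L^\Phi_w(B)$ norms and invoking Corollary~\ref{lem4.0} to evaluate $\|\chi_B\|_{L^\Phi_w(B)}=1/\Phi^{-1}(w(B)^{-1})$ completes the proof.
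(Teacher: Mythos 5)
Your proposal is correct, and its skeleton coincides with the paper's: the same splitting $f=f_1+f_2$, the same use of Theorem~\ref{LiNaYaZhprop2.17-1} plus the \eqref{ves2fd}-type absorption for $f_1$, and the same decomposition $b(y)-b(z)=(b(y)-b_B)-(b(z)-b_B)$ producing the two terms you call $J_1$ and $J_2$ (the paper's ${\mathfrak A}$ and ${\mathfrak B}$). Your treatment of $J_2$ is essentially verbatim the paper's treatment of ${\mathfrak B}$: Minkowski, the kernel bound $|x-z|^{-n}\approx\int_{|z-x_0|}^\i t^{-n-1}dt$, the split of $|b(z)-b_B|$ at $b_{B(x_0,t)}$, generalized H\"older via Theorem~\ref{HolderOr} and Lemma~\ref{Bmo-orlicz}, and \eqref{propBMO}. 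The one place where you genuinely diverge is $J_1$. The paper keeps the $y$-integration inside: it bounds the convolution by the average of $|f|$ over $B(x,t)$, converts that average to $\Phi^{-1}(w(B(x,t))^{-1})\|f\|_{L^\Phi_w(B(x,t))}$ via Lemma~\ref{lemHold}, integrates $|b(y)-b_B|^2$ over the cone cross-section by John--Nirenberg to produce the $\log(2+t/r)^2$ factor, and then passes from the resulting $L^2(dt/t)$ expression to the required $L^1(dt/t)$ expression by discretizing over $t\in[2^jr,2^{j+1}r]$ and invoking $\ell^2\hookrightarrow\ell^1$. You instead apply Minkowski's integral inequality in $z$ first, reducing $J_1$ to a pointwise kernel estimate $\bigl(\int_{c|z-x_0|}^\i\int_{B(x_0,3t)}|b(y)-b_B|^2\,dy\,t^{-3n-1}dt\bigr)^{1/2}\lesssim\|b\|_*(1+\ln(|z-x_0|/r))|z-x_0|^{-n}$ via \eqref{lem2.4.xx} with $p=2$ and \eqref{propBMO}, after which $J_1$ is handled exactly like $J_2$ by Fubini and Lemma~\ref{lemHold}. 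Both routes are valid; yours has the advantage of treating $J_1$ and $J_2$ by one uniform mechanism (kernel estimate plus Fubini) and avoids the discretization/$\ell^2\hookrightarrow\ell^1$ step, at the cost of the elementary but nontrivial computation $\int_s^\i(1+\ln(t/r))^2t^{-2n-1}dt\lesssim(1+\ln(s/r))^2s^{-2n}$, which you should write out. The final passage to the $L^\Phi_w(B)$ norm via Corollary~\ref{lem4.0} is exactly what the paper does.
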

\begin{proof}
For an arbitrary $x_0 \in\Rn$, set $B \equiv B(x_0,r)$ for the ball centered at $x_0$ and of radius $r$. Write $f=f_1+f_2$ with
$f_1\equiv f\chi_{_{2B}}$ and $f_2 \equiv f\chi_{_{\dual (2B)}}$.
We have
$
\left\|[b,G_{\a}]f \right\|_{L^{\Phi}_{w}(B)} \leq
\left\|[b,G_{\a}]f_1 \right\|_{L^{\Phi}_{w}(B)}+
\left\|[b,G_{\a}]f_2 \right\|_{L^{\Phi}_{w}(B)}
$
by the triangle inequality.
From Theorem \ref{LiNaYaZhprop2.17-1},
the boundedness of $[b,G_{\a}]$
in $L^{\Phi}_{w}(\Rn)$
it follows that
$
\|[b,G_{\a}]f_1\|_{L^{\Phi}_{w}(B)}
\leq
\|[b,G_{\a}]f_1\|_{L^{\Phi}_{w}(\Rn)}
\lesssim \|b\|_{*} \, \|f_1\|_{L^{\Phi}_{w}(\Rn)}
= \|b\|_{*} \, \|f\|_{L^{\Phi}_{w}(2B)}.
$
For $\left\|[b,G_{\a}]f_2 \right\|_{L^{\Phi}_{w}(B)}$,
we write it out in full
\begin{eqnarray*}
[b,G_{\a}]f_{2}(x) =
\left(\displaystyle \iint_{\Gamma(x)}\sup_{\phi\in C_{\a}}\left|\int_{\mathbb{R}^{n}}[b(y)-b(z)]\phi_{t}(y-z)f_{2}(z)dz\right|^{2}\frac{dydt}{t^{n+1}}\right)^{\frac{1}{2}}.
\end{eqnarray*}
We then divide it into two parts:
\begin{eqnarray*}
[b,G_{\a}]f_{2}(x)
{}&\leq&\left(\iint_{\Gamma(x)}\sup_{\phi\in C_{\a}}\left|\int_{\mathbb{R}^{n}}[b(y)-b_{B}]\phi_{t}(y-z)f_{2}(z)dz\right|^{2}\frac{dydt}{t^{n+1}}\right)^{\frac{1}{2}}
\\
{}&{}&+\left(\iint_{\Gamma(x)}\sup_{\phi\in C_{\a}}\left|\int_{\mathbb{R}^{n}}[b_{B}-b(z)]\phi_{t}(y-z)f_{2}(z)dz\right|^{2}\frac{dydt}{t^{n+1}}\right)^{\frac{1}{2}}
\\
{}&:=&{\mathfrak A}+{\mathfrak B}.
\end{eqnarray*}
First, for the quantity ${\mathfrak A}$,
we proceed as follows:
\begin{eqnarray*}
{\mathfrak A}
&=&\left(\iint_{\Gamma(x) \cap {\mathbb R}^n \times [r,\infty)}
|b(y)-b_{B}|^2
\sup_{\phi\in C_{\a}}\left|\int_{\mathbb{R}^{n}}\phi_{t}(y-z)f_{2}(z)dz\right|^{2}\frac{dydt}{t^{n+1}}\right)^{\frac{1}{2}}\\
&\lesssim &\left(\iint_{\Gamma(x) \cap {\mathbb R}^n \times [r,\infty)}
|b(y)-b_{B}|^2
\left(\frac{1}{t^n}\int_{B(x,t)}|f(z)|\,dz
\right)^2\frac{dydt}{t^{n+1}}\right)^{\frac{1}{2}}.
\end{eqnarray*}
Note that
\[
\int_{B(x,t)}|f(z)|\,dz
\lesssim
|B(x,t)|\Phi^{-1}(w(B(x,t)^{-1})
\|f\|_{L^\Phi_w(B(x,t))}.
\]
Thus,
by virtue of the embedding
$\ell^2({\mathbb N}) \hookrightarrow \ell^1({\mathbb N})$,
we obtain
\begin{align*}
{\mathfrak A}
&\lesssim
\left(\iint_{\Gamma(x) \cap {\mathbb R}^n \times [r,\infty)}
|b(y)-b_{B}|^2\Phi^{-1}(w(B(x,t)^{-1})^2
\|f\|_{L^\Phi_w(B(x,t))}{}^2
\frac{dydt}{t^{n+1}}\right)^{\frac{1}{2}}\\
&\lesssim
\left(\int_r^{\infty}\Phi^{-1}(w(B(x,t)^{-1})^2
\log\left(2+\frac{t}{r}\right)^2
\|f\|_{L^\Phi_w(B(x,t))}{}^2\,\frac{dt}{t}
\right)^{\frac{1}{2}}\\
&\lesssim
\left(
\sum_{j=1}^\infty \Phi^{-1}(w(B(x,2^jr)^{-1})^2
\log\left(2+2^j\right)^2
\|f\|_{L^\Phi_w(B(x,2^jr))}{}^2
\right)^{\frac{1}{2}}\\
&\lesssim
\sum_{j=1}^\infty \Phi^{-1}(w(B(x,2^jr)^{-1})
\log\left(2+2^j\right)
\|f\|_{L^\Phi_w(B(x,2^jr))}\\
&\lesssim
\int_r^{\infty}\Phi^{-1}(w(B(x,t)^{-1})
\log\left(2+\frac{t}{r}\right)
\|f\|_{L^\Phi_w(B(x,t))}\,\frac{dt}{t}.
\end{align*}

For the quantity ${\mathfrak B}$, since $|y-x|<t$, we have $|x-z|<2t$.
Thus, by Minkowski's inequality,
we have a pointwise estimate:
\begin{eqnarray*}
{\mathfrak B} &\leq&
\left(\displaystyle \iint_{\Gamma(x)}
\left|\int_{B(x,2t)}|b_{B}-b(z)||f_{2}(z)|dz\right|^{2}
\frac{dydt}{t^{3n+1}}\right)^{\frac{1}{2}}
\\
{}&\lesssim& \left(\displaystyle \int_{0}^{\i}
\left|\int_{B(x,2t)}|b_{B}-b(z)||f_{2}(z)|dz\right|^{2}
\frac{dt}{t^{2n+1}}\right)^{\frac{1}{2}} \\
&\lesssim&
\displaystyle \int_{\dual{B(x_0,2r)}}\frac{|b_{B}-b(z)|
|f(z)|}{|x-z|^{n}}dz.
\end{eqnarray*}
Thus, we have
\[
\|{\mathfrak B}\|_{L^{\Phi}_{w}(B)}
\lesssim \Big\|\int_{\dual (2B)}
\frac{|b(z)-b_{B}|}{|x_0-z|^{n}}|f(z)|dz\Big\|_{L^{\Phi}_{w}(B)}.
\]
Since $|z-x|\displaystyle \geq\frac{1}{2}|z-x_{0}|$, we obtain
\begin{align*}
\|{\mathfrak B}\|_{L^{\Phi}_{w}(B)}&\lesssim
\frac{1}{\Phi^{-1}\big(w(B(x_0,r))^{-1}\big)}
\int_{\dual(2B)}\frac{|b(z)-b_{B}|}{|x_0-z|^{n}}|f(z)|dz
\\
&\thickapprox \frac{1}{\Phi^{-1}\big(w(B(x_0,r))^{-1}\big)}
\int_{\dual(2B)}|b(z)-b_{B}||f(z)|\int_{|x_0-z|}^{\i}\frac{dt}{t^{n+1}}dz
\\
&\thickapprox \frac{1}{\Phi^{-1}\big(w(B(x_0,r))^{-1}\big)}
\int_{2r}^{\i}\left(\int_{2r\leq |x_0-z|\leq t}
|b(z)-b_{B}||f(z)|dz\right)\frac{dt}{t^{n+1}}
\\
&\lesssim \frac{1}{\Phi^{-1}\big(w(B(x_0,r))^{-1}\big)}
\int_{2r}^{\i}\left(\int_{B(x_0,t)}
|b(z)-b_{B}||f(z)|dz\right)\frac{dt}{t^{n+1}}.
\end{align*}
We decompose the matters by using the trinangle inequality:
\begin{align*}
\|{\mathfrak B}\|_{L^{\Phi}_{w}(B)}
&\lesssim
\frac{1}{\Phi^{-1}\big(w(B(x_0,r))^{-1}\big)} \int_{2r}^{\i}
\left(\int_{B(x_0,t)}
|b(z)-b_{B(x_0,t)}||f(z)|dz\right)\frac{dt}{t^{n+1}}
\\
&\quad +
\int_{2r}^{\i}
\frac{|b_{B}-b_{B(x_0,t)}|}{\Phi^{-1}\big(w(B(x_0,r))^{-1}\big)}
\left(\int_{B(x_0,t)} |f(z)|dz\right)\frac{dt}{t^{n+1}}
\end{align*}
Applying H\"older's inequality,
by Lemma \ref{Bmo-orlicz} and \eqref{propBMO} we get
\begin{align*}
\|{\mathfrak B}\|_{L^{\Phi}_{w}(B)}
&\lesssim
\int_{2r}^{\i}
\left\||b-b_{B(x_0,t)}|w(\cdot)^{-1}\right\|_{L^{\widetilde{\Phi}}_{w}(B)}
\frac{\|f\|_{L^{\Phi}_{w}(B(x_0,t))}dt}{t^{n+1}\Phi^{-1}\big(w(B(x_0,r))^{-1}\big)}
\\
& \quad +
\int_{2r}^{\i}|b_{B}-b_{B(x_0,t)}|
\|f\|_{L^{\Phi}_{w}(B(x_0,t))}
\frac{\Phi^{-1}\big(w(B(x_0,t))^{-1}\big)}
{\Phi^{-1}\big(w(B(x_0,r))^{-1}\big)}
\frac{dt}{t}
\\
& \lesssim
\|b\|_{*}
\int_{2r}^{\i}\Big(1+\ln \frac{t}{r}\Big)
\|f\|_{L^{\Phi}_{w}(B(x_0,t))}
\frac{\Phi^{-1}\big(w(B(x_0,t))^{-1}\big)}
{\Phi^{-1}\big(w(B(x_0,r))^{-1}\big)}
\frac{dt}{t}.
\end{align*}
Summing $\|{\mathfrak A}\|_{L^{\Phi}_{w}(B)}$ and $\|{\mathfrak B}\|_{L^{\Phi}_{w}(B)}$,
we obtain
\begin{align*}
& \|[b,G_{\a}]f_2\|_{L^{\Phi}_{w}(B)}
\\
& \lesssim \frac{\|b\|_{*}}{\Phi^{-1}\big(w(B(x_0,r))^{-1}\big)}
\int_{2r}^{\i}\Big(1+\ln \frac{t}{r}\Big) \|f\|_{L^{\Phi}_{w}(B(x_0,t))}\Phi^{-1}\big(w(B(x_0,t))^{-1}\big)\frac{dt}{t}.
\end{align*}
Finally,
\begin{align*}
& \|[b,G_{\a}]f\|_{L^{\Phi}_{w}(B)} \lesssim
\|b\|_{*}\,\|f\|_{L^{\Phi}_{w}(2B)}
\\
& + \frac{\|b\|_{*}}{\Phi^{-1}\big(w(B(x_0,r))^{-1}\big)}
\int_{2r}^{\i}\Big(1+\ln \frac{t}{r}\Big)
\|f\|_{L^{\Phi}_{w}(B(x_0,t))}\Phi^{-1}\big(w(B(x_0,t))^{-1}\big)\frac{dt}{t},
\end{align*}
and the statement of Lemma \ref{lem5.1.} follows by \eqref{ves2fd}.
\end{proof}
Finally, Theorem \ref{3.4.XcomT} follows
by Lemma \ref{lem5.1.} and Theorem \ref{thm3.2.}
in the same manner as in the proof of Theorem \ref{thm4.4.}.

\

\section{Acknowledgements}
{The research of V. Guliyev and F. Deringoz was partially supported by the grant of Ahi Evran University Scientific Research Projects (PYO.FEN.4003.13.003) and (PYO.FEN.4003-2.13.007).
We thank the referee for he/her valuable comments to the paper.}

\

\

$^{a}$ Department of Mathematics, Ahi Evran University, Kirsehir, Turkey

$^{b}$ Institute of Mathematics and Mechanics, Baku, Azerbaijan

{\it E-mail address}: vagif@guliyev.com

\

$^{c}$ Baku State University, Baku, AZ 1148, Azerbaijan

{\it E-mail address}: mehriban\_omarova@yahoo.com

\

$^{d}$ Department of Mathematics and Information Science, Tokyo Metropolitan University

{\it E-mail address}: yoshihiro-sawano@celery.ocn.ne.jp

\end{document}